\numberwithin{equation}{section}
\newcounter{myenumcount}
\theoremstyle{plain}
\newtheorem{theorem}{Теорема}
\newtheorem{corollary}{Следствие}
\newtheorem{lemma}{Лемма}
\theoremstyle{definition}
\newtheorem{remark}{Замечание}
\begin{document}
	\begin{center}
		{\large О НЕКОТОРЫХ АДАПТИВНЫХ АЛГОРИТМАХ ЗЕРКАЛЬНОГО СПУСКА ДЛЯ ЗАДАЧ ВЫПУКЛОЙ И СИЛЬНО ВЫПУКЛОЙ ОПТИМИЗАЦИИ С ФУНКЦИОНАЛЬНЫМИ ОГРАНИЧЕНИЯМИ}
	\end{center}

\begin{center}
Ф. С. Стонякин, М. С. Алкуса, А. А. Титов 		
\end{center}

\section{Введение}
Задачи минимизации выпуклых гладких и негладких функционалов c ограничениями возникают во многих задачах современной large-scale оптимизации и её приложений \cite{bib_ttd,bib_Shpirko_Nesterov}. Для таких задач имеется множество методов, среди которых можно выделить метод уровней \cite{bib_Nesterov}, метод штрафных функций \cite{bib_Lan,bib_Vasilyev}, метод множителей Лагранжа \cite{bib_Boyd}. Метод зеркального спуска (МЗС) \cite{bib_mirror_and_projected,bib_Nemirovski1983} восходит к обычному градиентному спуску и вполне может считаться достаточно простым методом для задач негладкой выпуклой оптимизации. Предлагаемая работа посвящена некоторым адаптивным методам зеркального спуска для задач выпуклого программирования с липшицевыми функциональными ограничениями.

Отметим, что функциональные ограничения, вообще, могут быть негладким (недифференцируемыми) и поэтому мы рассматриваем субградиентные методы. Методы с использованием субградиентов негладких выпуклых функций разрабатываются уже несколько десятилетий и восходят к известным пионерским работам, одна из которых посвящена градиентному методу для безусловных задач при евклидовом расстоянии \cite{bib_Shor}, а другая --- его обобщению для задач с ограничениями \cite{bib_Polyak1967}. В работе \cite{bib_Polyak1967} предложена идея переключения шагов между направлением субградиента целевого функционала и направлением субградиента ограничения. Обобщение метода градиентного спуска на постановку задачи с неевклидовым расстоянием называют {\it методом зеркального спуска}. Этот метод был предложен в \cite{bib_Nemirovski1979,bib_Nemirovski1983} (см. также \cite{bib_mirror_and_projected}). Зеркальный спуск для задач с функциональными ограничениями был предложен в \cite{bib_Nemirovski1983} (см. также \cite{bib_comirror}). При этом, как правило, для нахождения величины шага и критерия остановки для зеркального спуска необходимо знать величину константы Липшица целевого функционала, а также ограничения. Известны также и методы с адаптивным выбором шага, рассмотрены в \cite{bib_Nemirovski} для задач без ограничений, а в \cite{bib_comirror} --- для задач с ограничениями. Недавно в \cite{bib_Adaptive} были предложены оптимальные алгоритмы зеркального спуска для задач выпуклого программирования с липшицевыми функциональными ограничениями с адаптивным выбором шага, а также адаптивными критериями остановки. Также модификации этих методов для задач в случае нескольких выпуклых функциональных ограничений были проанализированы в \cite{bib_Stonyakin}.

В настоящей статье мы рассматриваем некоторые алгоритмы зеркального спуска для задач минимизации выпуклого функционала $f$ с неположительным, выпуклым и липшицевым негладким функциональным ограничением $g$. Важно, что целевой функционал может иметь разный уровень гладкости. В частности, целевой функционал $f$ может не удовлетворять свойству Липшица, но иметь липшицев градиент. Например, квадратичные функционалы не удовлетворяют обычному свойству Липшица (или константа Липшица достаточно большая), но имеют липшицев градиент. Можно рассматривать и негладкие выпуклые функции, равные максимуму конечного набора дифференцируемых функционалов с липшицевым градиентом. Например, пусть $A_{i} (i \in \overline{1, m})$ --- положительно полуопределённые матрицы ($x^{T}A_{i}x \geq 0 $ для всякого $x\in X $) и целевой функционал имеет вид
\begin{equation}\label{equiv_nonstand1}
f(x)=\max\limits_{i = \overline{1, m}}f_{i}(x),
\end{equation}
где
\begin{equation}\label{equiv_nonstand2}
f_{i}(x) = \dfrac{1}{2} \langle A_{i}x,x \rangle - \langle b_{i},x \rangle + c_{i}, \;\;  i = \overline{1, m}.
\end{equation}
для некоторых фиксированных $b_{i} \in \mathbb{R}^{n} $ и $c_{i} \in \mathbb{R}$, для всех $i = \overline{1, m}$.
Отметим, что функционалы вида \eqref{equiv_nonstand1} -- \eqref{equiv_nonstand2} возникают в задачах проектирования механических конструкций Truss Topology Design со взвешенными балками \cite{bib_Nesterov2016}. Для задач минимизации функционалов такого типа  при наличии выпуклых липшицевых ограничений в \cite{bib_Adaptive,bib_Titov,bib_Stonyakin} на базе методики работ Ю.Е. Нестерова \cite{bib_Nesterov,bib_Nesterov2016} были предложены некоторые новые адаптивные алгоритмы зеркального спуска, а также обоснована их оптимальность. Часть этих результатов (про частично адаптивный метод) была заявлена в качестве доклада на VII Международную конференцию <<Проблемы оптимизации и их приложения>> (OPTA-2018) \cite{bib_Titov}. Настоящая статья посвящена изложению основных результатов доклада \cite{bib_Titov}, а также развитию результатов \cite{bib_Adaptive,bib_Titov,bib_Stonyakin} в следующих направлениях.

Во-первых, доказывается оптимальность с точки зрения оракульных оценок предложенных методов в \cite{bib_Adaptive,bib_Titov,bib_Stonyakin} для задач с выпуклым липшицевым целевым функционалом, а также для задач с липшицевым гессианом при наличии выпуклых липшицевых ограничений.

Во-вторых, на базе техники рестартов (перезапусков) методов из \cite{bib_Adaptive,bib_Titov} (для выпуклых задач) предложены новые алгоритмы зеркального спуска аналогично для задач минимизации $\mu$-сильно выпуклых функционалов $f$ с неположительным, $\mu$-сильно выпуклым и липшицевым негладким функциональным ограничением $g$. Заметим, что техника рестартов метода для выпуклых задач с целью ускорения сходимости для сильно выпуклых задач восходит к 1980-м годам, см. \cite{bib_Nemirovski1983,bib_Nesterov1983}. Техника такого типа была использована в  \cite{bib_Juditsky_Nemirovski} для обоснования более высокой скорости сходимости метода зеркального спуска для сильно выпуклого целевого функционала в задачах без ограничений.

В-третьих, мы приводим ряд численных экспериментов, иллюстрирующих преимущества предложенных нами методов перед их аналогами. В частности, показано, что для задачи Ферма-Торричелли-Штейнера (целевой функционал удовлетворяет условию Липшица с константой 1) при наличии квадратичных ограничений предлагаемый нами метод может работать существенно быстрее, чем аналогичный адаптивный и также оптимальный для класса задач с липшицевым целевым функционалом с точки зрения оракульных оценок метод (\cite{bib_Adaptive}, п. 3.1). Также приведены расчёты, иллюстрирующие некоторые преимущества предлагаемых нами методов для сильно выпуклых задач.

Статья состоит из введения и 5 основных разделов. В разделе 2 мы приводим некоторые вспомогательных сведения, а также основные понятия для метода зеркального спуска. В разделе 3 мы описываем адаптивный алгоритм зеркального спуска (алгоритм \ref{algorithm1}) из (\cite{bib_Adaptive}, п. 3.3) и частично адаптивный алгоритм \ref{algorithm2FT} \cite{bib_Titov}. В разделе 4 мы доказываем оценки скорости сходимости данных методов и обосновываем их оптимальность на рассматриваемых классах задач при различных допущениях на уровень гладкости целевого функционала. Раздел 5 посвящён методам для задач минимизации сильно выпуклых функций с рестартами алгоритмов \ref{algorithm1} (алгоритм \ref{algorithm2}) и \ref{algorithm2FT} (алгоритм \ref{Alg2}), а также соответствующим теоретическим оценкам скорости сходимости. В последнем разделе мы приводим некоторые численные эксперименты, иллюстрирующие некоторые преимущества предлагаемых нами методов.

\section{Постановка задачи и основные понятия}

Пусть $(E,||\cdot||)$ --- конечномерное нормированное векторное пространство и $E^*$ --- сопряженное пространство к $E$ со стандартной нормой:
$$||y||_*=\max\limits_x\{\langle y,x\rangle,||x||\leq1\},$$
где $\langle y,x\rangle$ --- значение линейного непрерывного функционала $y$ в точке $x \in E$.

Пусть $X\subset E$ --- замкнутое выпуклое множество. Рассмотрим два выпуклых субдифференцируемых функционала $f$ и $g:X\rightarrow\mathbb{R}$. Также предположим, что функционал $g$ удовлетворяет условию Липшица относительно нормы $\| \cdot \|$, т. е. существует $ M_g> 0 $, такое, что
\begin{equation}\label{eq1}
|g(x)-g(y)|\leq M_g\lVert x-y \rVert
\end{equation}
для всяких $ x, y \in X $. Это означает, что в каждой точке $x \in X$ можно вычислить субградиент $\nabla g (x) $, причём $\|\nabla g(x)\|_* \leq M_g$. Напомним, что для дифференцируемого функционала $g$ субградиент $\nabla g (x)$ есть обычный градиент.

В настоящей работе будем рассматривать следующий тип задач  оптимизации:
\begin{equation}\label{eq2}
f(x) \rightarrow \min\limits_{x\in X},
\end{equation}
\begin{equation}
\label{problem_statement_g}
\hspace{0.3cm} g(x) \leq 0.
\end{equation}
если $f$ и $g$ удовлетворяют упомянутым предыдущим условиям. Сделаем предположение о разрешимости задачи \eqref{eq2} -- \eqref{problem_statement_g}.

Отметим, что часть результатов работы относятся к постановке задачи для $\mu$-сильно выпуклых субдифференцируемых функционалов $f$ и $g:X\rightarrow\mathbb{R}$, т.е. для произвольных $x, y \in X$ имеет место неравенство
\begin{equation} \label{def_strongly_convex}
f(y) \geq f(x) + \langle \nabla f(x), y-x \rangle + \frac{\mu}{2} \| y-x \|^2,
\end{equation}
и такое же неравенство верно для $g$ (с тем же параметром сильной выпуклости $\mu$).

Для дальнейших рассуждений нам также потребуются вспомогательные понятия (см., например, \cite{bib_Nemirovski}). Введём так называемую {\it прокс-функцию} $d : X \rightarrow \mathbb{R}$, обладающую свойством непрерывной дифференцируемости и $1$-сильной выпуклости относительно нормы $\lVert\cdot\rVert$, и предположим, что $\min\limits_{x\in X} d(x) = d(0).$  Будем полагать, что существует такая константа $\Theta_0 > 0$, что $d(x_{*}) \leq \Theta_0^2,$ где $x_*$  --- точное решение задачи (\ref{eq2})--(\ref{problem_statement_g}). Отметим, что если имеется множество решений $X_*$, то мы предполагаем, что для константы $\Theta_0$
$$\min\limits_{x_* \in X_*} d(x_*) \leq \Theta_0^2.$$
Для произвольных $x, y\in X$ рассмотрим соответствующую дивергенцию Брэгмана
$$V(x, y) = d(y) - d(x) - \langle \nabla d(x), y-x \rangle.$$

В зависимости от постановки конкретной задачи возможны различные подходы к определению прокс-структуры задачи и соответствующей дивергенции Брэгмана: евклидова, энтропийная и многие другие (см., например, \cite{bib_Nemirovski}). Стандартно определим оператор проектирования
$$
\mathrm{Mirr}_x (p) = \arg\min\limits_{u\in X} \big\{ \langle p, u \rangle + V(x, u) \big\} \;
\text{ для каждого }x\in X \text{ и }p\in E^*.
$$
Сделаем предположение о том, что оператор $\mathrm{Mirr}_x (p)$ легко вычислим.

Напомним одно известное утверждение (см., например \cite{bib_Nemirovski}).

\begin{lemma}\label{lem1}
Пусть  $f:X\rightarrow\mathbb{R}$ --- выпуклый субдифференцируемый функционал на выпуклом множестве $X$ и $z=Mirr_{y}(h\nabla f(y))$ для некоторого $y\in X$. Тогда для произвольных $x\in X$ и $h>0$ справедливо неравенство
\begin{equation} \label{equation7}
	h\langle\nabla f(y), y-x\rangle\leq\frac{h^2}{2}||\nabla f(y)||_*^2 + V(y,x) - V(z,x).
\end{equation}
\end{lemma}

\section{Адаптивный и частично адаптивный алгоритм зеркального спуска задач с выпуклыми функционалами}

Перейдём к описанию рассматриваемых методов \cite{bib_Adaptive,bib_Titov} для задач (\ref{eq2}) -- (\ref{problem_statement_g}).

Напомним следующий алгоритм адаптивного зеркального спуска для задач (\ref{eq2}) -- (\ref{problem_statement_g}) из (\cite{bib_Adaptive}, п. 3.3).
\begin{algorithm}[H]
	\caption{Адаптивный зеркальный спуск (нестандартные условия роста)}
	\label{algorithm1}
	\begin{algorithmic}[1]
		\REQUIRE $ \text{точность} \ \varepsilon>0; \text{начальная точка} \ x^0; \Theta_0; X; d(\cdot). $
		\STATE$I=:\emptyset$
		\STATE$N\leftarrow 0$
		\REPEAT
		\IF{$g(x^N) \leqslant \varepsilon$}
		\STATE $h_N\leftarrow\frac{\varepsilon}{\lVert \nabla f(x^N) \rVert_{*}}$
		\STATE$x^{N+1}\leftarrow Mirr_{x^N}(h_N\nabla f(x^N)) \;\; \text{("продуктивные шаги")}$
		\STATE $N\rightarrow I$
		\ELSE
		\STATE $(g(x^N)>\varepsilon)\rightarrow$
		\STATE $h_N\leftarrow\frac{\varepsilon}{\lVert \nabla g(x^N) \rVert_{*}^2}$
		\STATE $x^{N+1}\leftarrow Mirr_{x^N}(h_N\nabla g(x^N)) \;\; \text{("непродуктивные шаги")}$
		\ENDIF
		\STATE $N\leftarrow N+1$
		\UNTIL $\Theta_0^2 \leqslant \frac{\varepsilon^2}{2}\left(|I|+\sum\limits_{k\not\in I}\frac{1}{\lVert \nabla g(x^N) \rVert_{*}^2}\right)$
		\ENSURE $\bar{x}^N := \arg \min_{x^k, k\in I} f(x^k)$
	\end{algorithmic}
\end{algorithm}

Нам потребуется ввести для целевого функционала $f$ по аналогии с \cite{bib_Nesterov}, определим для некоторого субградиента $\nabla f(x)$ (мы допускаем, что в ходе работы метода можно использовать произвольный субградиент) в точке $y \in X$ следующую вспомогательную величину:

\begin{equation}
v_f(x, y)=\left\{
\begin{aligned}
&\left\langle\frac{\nabla f(x)}{\|\nabla f(x)\|_{*}},x-y\right\rangle, \quad &\nabla f(x) \ne 0\\
&0 &\nabla f(x) = 0\\
\end{aligned}
\right.,\quad x \in X.
\label{eq:vfDef}
\end{equation}

Для оценки скорости сходимости алгоритма \ref{algorithm1} в \cite{bib_Adaptive} получен следующий результат.

\begin{theorem}
Пусть верно неравенство \eqref {eq1} и известна константа $ \Theta_ {0}> 0 $ такова, что $d(x_*) \leq \Theta_{0}^{2}$. Если $\varepsilon > 0$ --- фиксированное число, то алгоритм \ref{algorithm1} работает не более
\begin{equation}\label{atsenka_alg1}
N=\left\lceil\frac{2\max\{1, M_g^2\}\Theta_0^2}{\varepsilon^2}\right\rceil
\end{equation}
итераций, причём после его остановки справедливо неравенство
	\begin{equation}\label{eq09}
	\min\limits_{k \in I} v_f(x^k,x_*)<\varepsilon.
	\end{equation}
	\label{theorem1}
\end{theorem}

Возможно \cite{bib_Titov} предложить также и частично адаптивный метод для задачи (\ref{eq2}) -- (\ref{problem_statement_g}).
Его отличие от алгоритма \ref{algorithm1} в том, что адаптивно выбирается шаг лишь на продуктивных итерациях и критерий остановки неадаптивен.

\begin{algorithm}
	\caption{Частично адаптивная версия Алгоритма \ref{algorithm1}}
	\label{algorithm2FT}
	\begin{algorithmic}[1]
		\REQUIRE $ \text{точность} \ \varepsilon>0; \text{начальная точка} \ x^0; \Theta_0; X; d(\cdot). $
		\STATE $x^0 = \text{argmin}_{x \in X} d(x) $
		\STATE$I=:\emptyset$
		\STATE$N\leftarrow 0$
		\REPEAT
		\IF{$g(x^N) \leqslant \varepsilon \rightarrow $ }
		\STATE $h_N\leftarrow\frac{\varepsilon}{M_{g} \cdot \lVert \nabla f(x^N) \rVert_{*}}$
		\STATE$x^{N+1}\leftarrow Mirr_{x^N}(h_N\nabla f(x^N)) \;\; \text{("продуктивные шаги")}$
		\STATE $N\rightarrow I$
		\ELSE
		\STATE $(g(x^N)>\varepsilon)\rightarrow$
		\STATE $h_N\leftarrow\frac{\varepsilon}{ M_g^{2}}$
		\STATE $x^{N+1}\leftarrow Mirr_{x^N}(h_N\nabla g(x^N)) \;\; \text{("непродуктивные шаги")}$
		\ENDIF
		\STATE $N\leftarrow N+1$
		\UNTIL $N \geq \left\lceil \dfrac{2M_{g}^{2} \Theta_0^2}{\varepsilon^2} \right\rceil $
		\ENSURE $\bar{x}^N := \arg \min_{x^k, k\in I} f(x^k)$
	\end{algorithmic}
\end{algorithm}

\newpage
Пусть $[N]=\{k\in\overline{0,N-1}\},\;J=[N]/I$, где $I$  набор индексов продуктивных шагов
\begin{equation}\label{equation4}
h_k=\frac{\varepsilon}{M_g||\nabla f(x^k)||_{*}},
\end{equation}
и $|I|$ --- количество "продуктивных шагов". Аналогично, для "непродуктивных шагов" из множества $J$ аналогичная переменная определяется следующим образом:
\begin{equation}\label{eq5}
h_k=\frac{\varepsilon}{M_g^2},
\end{equation}
и $|J|$ --- количество "непродуктивных шагов". Очевидно,
\begin{equation}\label{eq6}
|I|+|J|=N.
\end{equation}

Cправедлив следующий аналог теоремы \ref{theorem1} (см. также \cite{bib_Titov}).

\begin{theorem}\label{th1}
	Пусть $\varepsilon > 0$ --- фиксированное число и алгоритм \ref{algorithm2FT} работает
	\begin{equation}\label{eq8}
	N=\left\lceil\frac{2M_g^2\Theta_0^2}{\varepsilon^2}\right\rceil
	\end{equation}
	итераций. Тогда
	\begin{equation}\label{eq9}
	\min\limits_{k\in I} v_f(x^k,x_*)<\frac{\varepsilon}{M_g}.
	\end{equation}
\end{theorem}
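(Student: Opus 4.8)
The plan is to run the standard adaptive mirror-descent telescoping argument: apply Lemma~\ref{lem1} separately on productive and non-productive steps, and then sum the resulting one-step inequalities along the whole trajectory. Throughout write $V_k := V(x^k,x_*)$, and recall two preliminary facts: the Bregman divergence is nonnegative, and since $x^0=\arg\min_{x\in X}d(x)$, the first-order optimality of $x^0$ gives $V_0\le d(x_*)\le\Theta_0^2$.

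First I would treat a productive index $k\in I$. Applying Lemma~\ref{lem1} to $f$ with $y=x^k$, $z=x^{k+1}$, $x=x_*$ and the step $h_k$ from \eqref{equation4}, the left-hand side $h_k\langle\nabla f(x^k),x^k-x_*\rangle$ becomes exactly $\tfrac{\varepsilon}{M_g}v_f(x^k,x_*)$ by the definition \eqref{eq:vfDef}, while the quadratic term collapses to $\tfrac{\varepsilon^2}{2M_g^2}$. This yields
\[
\frac{\varepsilon}{M_g}\,v_f(x^k,x_*)\le\frac{\varepsilon^2}{2M_g^2}+V_k-V_{k+1},\qquad k\in I .
\]
For a non-productive index $k\in J$ I would apply the same lemma to $g$ with the step \eqref{eq5}. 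Here I combine three facts: convexity of $g$ and feasibility $g(x_*)\le 0$ give $\langle\nabla g(x^k),x^k-x_*\rangle\ge g(x^k)>\varepsilon$; the Lipschitz bound \eqref{eq1} gives $\|\nabla g(x^k)\|_*\le M_g$; substituting both into the lemma produces the strict progress estimate $\tfrac{\varepsilon^2}{2M_g^2}<V_k-V_{k+1}$.

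Next I would sum both families over $k\in[N]$, telescoping the Bregman terms to $V_0-V_N\le\Theta_0^2$. Using $|I|+|J|=N$ from \eqref{eq6} and collecting constants, the combined inequality rearranges into
\[
\frac{\varepsilon}{M_g}\sum_{k\in I}v_f(x^k,x_*)<\Theta_0^2-N\frac{\varepsilon^2}{2M_g^2}+|I|\frac{\varepsilon^2}{M_g^2}.
\]
The decisive cancellation is that the prescribed iteration count \eqref{eq8} forces $N\tfrac{\varepsilon^2}{2M_g^2}\ge\Theta_0^2$, so the first two terms are nonpositive and we are left with $\sum_{k\in I}v_f(x^k,x_*)<|I|\tfrac{\varepsilon}{M_g}$. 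Dividing by $|I|$ and bounding the minimum by the average then gives \eqref{eq9}.

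The one genuinely non-routine point, and where I would be most careful, is ensuring $I\ne\emptyset$, so that the final division by $|I|$ (and the minimum in \eqref{eq9}) is meaningful. I would settle this by contradiction: if every step were non-productive, summing the strict progress estimate over all $N$ indices would give $\Theta_0^2\ge V_0-V_N>N\tfrac{\varepsilon^2}{2M_g^2}\ge\Theta_0^2$, which is impossible. The remaining delicate bookkeeping is to keep the strict versus non-strict inequalities consistent — the strict constraint violation $g(x^k)>\varepsilon$ on non-productive steps, reinforced by the ceiling in \eqref{eq8}, is what ultimately upgrades the conclusion to the strict bound \eqref{eq9} — and to justify the estimate $V_0\le\Theta_0^2$ from the minimizing property of $x^0$.
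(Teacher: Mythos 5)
Your proposal is correct and follows essentially the same route as the paper's own proof: Lemma~\ref{lem1} applied separately to productive and non-productive steps, telescoping of the Bregman terms with $V(x^0,x_*)\leq\Theta_0^2$, the key bound $N\varepsilon^2/(2M_g^2)\geq\Theta_0^2$ coming from the choice \eqref{eq8}, and a proof by contradiction that $I\neq\emptyset$. The only cosmetic differences are that you absorb $g(x^k)>\varepsilon$ and $\|\nabla g(x^k)\|_*\leq M_g$ into a per-step strict progress estimate right away, and you phrase the $I\neq\emptyset$ contradiction in terms of Bregman divergences ($\Theta_0^2>\Theta_0^2$) rather than in terms of the accumulated constraint violations ($N\varepsilon<N\varepsilon$) as the paper does; these are the same argument.
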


\begin{proof}
	1) Для продуктивных шагов из \eqref{equation7}, \eqref{equation4} можно получить, что $$h_k\langle\nabla f(x^k), x^k-x\rangle\leq\frac{h_k^2}{2}||\nabla f(x^k)||_*^2+V(x^k,x)-V(x^{k+1},x).$$
	Принимая во внимание $\frac{h_k^2}{2}||\nabla f(x^k)||_*^2=\frac{\varepsilon^2}{2M_g^2}$, мы имеем
	\begin{equation}\label{eq001}
	h_k\langle\nabla f(x^k),x^k-x\rangle=\frac{\varepsilon}{M_g}\left\langle\frac{\nabla f(x^k)}{||\nabla f(x^k)||_*},\, x^k-x\right\rangle=\frac{\varepsilon}{M_g}v_f(x^k,x).
	\end{equation}
	
	2) Аналогично, для непродуктивных шагов $k\in J$:
	$$h_k(g(x^k)-g(x))\leq\frac{h_k^2}{2}||\nabla g(x^k)||_*^2+V(x^k,x)-V(x^{k+1},x).$$
	
	Используя \eqref{eq1} и $||\nabla g(x)||\leq M_g$, получаем
	\begin{equation}\label{eq002}
	h_k(g(x^k)-g(x))\leq\frac{\varepsilon^2}{2M_g^2}+V(x^k,x)-(x^{k+1},x).
	\end{equation}
	
	3) Из \eqref{eq001} и \eqref{eq002} для $x=x_*$, мы имеем
	$$
	\frac{\varepsilon}{M_g}\sum\limits_{k\in I}v_f(x^k,x_*)+\sum\limits_{k\in J}\frac{\varepsilon}{M_g^2}(g(x^k)-g(x_*))\leq
	$$
	\begin{equation}\label{eq10}
	\leq N\frac{\varepsilon^2}{2M_g^2}+
	\sum\limits_{k=0}^{N-1}(V(x^k,x_*)-V(x^{k+1},x_*)).
	\end{equation}
	
	Отметим, что для любого $k \in J$
	$$g(x^k)-g(x_*)\geq g(x^k)>\varepsilon$$
	и с учетом
	$$\sum\limits_{k=1}^N(V(x^k,x_*)-V(x^{k+1},x_*))\leq\Theta_0^2$$
	неравенство (\ref{eq10})  можно преобразовать следующим образом:
	$$\frac{\varepsilon}{M_g}\sum\limits_{k\in I}v_f(x^k,x_*)\leq N\frac{\varepsilon^2}{2M_g^2}+\Theta_0^2-\frac{\varepsilon^2}{M_g^2}|J|.$$
	
	С другой стороны,
	$$\sum\limits_{k\in I}v_f(x^k,x_*)\geq|I|\min\limits_{k\in I}v_f(x^k,x_*).$$
	
	Предположим, что
	\begin{equation}\label{eq11}
	\frac{\varepsilon^2}{2M_g^2}N\geq\Theta_0^2, \text{ или } N\geq\frac{2M_g\Theta_0^2}{\varepsilon^2}.
	\end{equation}
	Таким образом
	$$|I|\frac{\varepsilon}{M_g}\min v_f(x^k,x_*)<N\frac{\varepsilon^2}{2M_g^2}-\frac{\varepsilon^2}{M_g^2}|J|+\Theta_0^2\leq$$
	$$\leq\frac{N\varepsilon^2}{M_g^2}-\frac{\varepsilon^2}{M_g^2}|J|=\frac{\varepsilon^2}{M_g^2}|I|,$$
	откуда
	\begin{equation}\label{eq12}
	|I|\frac{\varepsilon}{M_g}\min v_f(x^k,x_*)<\frac{\varepsilon^2}{M_g^2}|I|\Rightarrow\min v_f(x^k,x_*)<\frac{\varepsilon}{M_g}.
	\end{equation}
	
	Чтобы закончить доказательство, мы должны показывать что  $|I|\neq 0$.
	Предположим наоборот, что $|I|=0\Rightarrow|J|=N$, т. е. все шаги непродуктивны, поэтому после использования
$$g(x^k)-g(x_*)\geq g(x^k)>\varepsilon$$
	мы можем видеть, что $$\sum\limits_{k=0}^{N-1} h_k(g(x^k)-g(x_*))\leq\sum\limits_{k=0}^{N-1}\frac{\varepsilon^2}{2M_g^2}+\Theta_0^2\leq
	N\frac{\varepsilon^2}{2M_g^2}+N\frac{\varepsilon^2}{2M_g^2}=N\frac{\varepsilon^2}{M_g^2}.
	$$
	Итак, $$\frac{\varepsilon}{M_g^2}\sum\limits_{k=0}^{N-1}(g(x^k)-g(x_*))\leq\frac{N\varepsilon^2}{M_g^2}$$
	и
	$$N\varepsilon<\sum\limits_{k=0}^{N-1}(g(x^k)-g(x_*))\leq N\varepsilon.$$
	
	Итак, мы получили противоречие и поэтому множество $I$ непусто.
\end{proof}

\begin{remark}
Поясним ситуацию, когда частично адаптивная версия алгоритма может оказаться более выгодной, чем адаптивная. Например, пусть имеется ситуация, когда нет возможности точного нахождения нормы (суб)градиента ограничения $\|\nabla g(x^k)\|_*$ для одного или нескольких непродуктивных шагов ($k \in J$), а известно лишь его некоторое приближение по норме: т.е. $\|\nabla g(x^k)\|_* = \alpha_k \pm \delta_k $, где $\delta_k$ --- точность приближения. По лемме \ref{lem1} на всяком непродуктивном шаге $x^k$ верно неравенство
	\begin{equation}\label{equation1}
	h_k \left( g(x^k) -g(x_*)\right) \leq \frac{h_k^2}{2}||\nabla g(x^k)||_*^2 + V(x^k,x_*) - V(x^{k+1},x_*).
	\end{equation}
Если $\alpha_k =0$ или $\alpha_k \rightarrow 0$, то мы не можем использовать неравенство \eqref{equation1}, поскольку это может привести к большой погрешности его правой части. В таком случае неадаптивный выбор шага $$h_k = \frac{\varepsilon}{M_g^2}$$
в алгоритме \ref{algorithm2FT} --- более подходящий вариант для решения задачи \eqref{eq2} -- \eqref{problem_statement_g}.
\end{remark}

\section{Оценки скорости сходимости рассмотренных методов и их оптимальность}

В данном разделе работы мы рассмотрим конкретные оценки скорости сходимости рассмотренных методов, которые обоснуют их оптимальность с точки зрения оракульных оценок (с точки зрения теории А.С. Немировского и Д.Б. Юдина). Точнее говоря ввиду липшицевости и, вообще говоря, негладкости функциональных ограничений для оптимальности метода с точки зрения нижних оракульных оценок этого достаточно показать \cite{bib_Nemirovski}, что для достижения требуемой точности $\varepsilon$ решения задачи \eqref{eq2}--\eqref{problem_statement_g} для каждого из рассмортренных в данном разделе статьи класса целевых функционалов достаточно $$O\left(\frac{1}{\varepsilon^2}\right)$$ итераций метода, предполагающих вычисление (суб)градиента целевого функционала или ограничения. Будем использовать следующее вспомогательное утверждение (см. например \cite{bib_Nesterov,bib_Nesterov2016}). Пусть $x_*$ ---  решение задачи \eqref{eq2} ---  \eqref{problem_statement_g}.

\begin{lemma}\label{Lemma_Nesterov}
	Введём следующую функцию:
	\begin{equation}\label{eq13}
	\omega(\tau)=\max\limits_{x\in X}\{f(x)-f(x_*):||x-x_*||\leq\tau\},
	\end{equation} где $\tau$ положительное число. Тогда для всякого $y \in X$
	\begin{equation}\label{eq_lemma}
	f(y) - f(x_*) \leq \omega(v_f(y,x_*)).
	\end{equation}
	\end{lemma}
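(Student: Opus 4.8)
The plan is to recognize $\omega(v_f(y,x_*))$ as a legitimate value of the maximum in \eqref{eq13}, realized at an explicit competitor built from the supporting hyperplane of $f$ at $y$. First I would clear the degenerate case $\nabla f(y)=0$: then $v_f(y,x_*)=0$ by \eqref{eq:vfDef}, the point $y$ is a global minimizer of the convex function $f$, whence $f(y)\le f(x_*)$, while $\omega(0)=\max\{f(x)-f(x_*):x\in X,\ \|x-x_*\|\le 0\}=0$; hence \eqref{eq_lemma} holds in this case.

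For $\nabla f(y)\neq 0$ set $\tau:=v_f(y,x_*)=\big\langle \nabla f(y)/\|\nabla f(y)\|_*,\ y-x_*\big\rangle$. I would first certify that $\tau$ is an admissible argument of $\omega$. Convexity of $f$ gives $0\le f(y)-f(x_*)\le\langle\nabla f(y),\,y-x_*\rangle$, so $\tau\ge 0$; and the definition of the dual norm gives $\langle\nabla f(y),\,y-x_*\rangle\le\|\nabla f(y)\|_*\,\|y-x_*\|$, so $\tau\le\|y-x_*\|$.

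The heart of the argument is the choice of competitor. Pick a unit vector $u\in E$, $\|u\|=1$, at which the dual norm is attained, $\langle\nabla f(y),u\rangle=\|\nabla f(y)\|_*$, and set $\hat x:=x_*+\tau u$, so that $\|\hat x-x_*\|=\tau$. Then
$$\langle\nabla f(y),\,\hat x-y\rangle=\langle\nabla f(y),\,x_*-y\rangle+\tau\langle\nabla f(y),u\rangle=-\tau\|\nabla f(y)\|_*+\tau\|\nabla f(y)\|_*=0,$$
so convexity of $f$ yields $f(\hat x)\ge f(y)+\langle\nabla f(y),\hat x-y\rangle=f(y)$. Since $\hat x$ lies in the ball $\{\|x-x_*\|\le\tau\}$, it competes in \eqref{eq13} and gives $\omega(\tau)\ge f(\hat x)-f(x_*)\ge f(y)-f(x_*)$, which is exactly \eqref{eq_lemma}.

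The one step that genuinely invokes a hypothesis is the admissibility $\hat x\in X$ of this competitor: the construction moves from $x_*$ in the steepest-ascent direction $u$, which need not remain inside a thin feasible set. In the regime of the cited results this is not an issue (for instance when $X=E$, or when the closed ball of radius $\tau$ about $x_*$ is contained in $X$, so that the maximum in \eqref{eq13} already ranges over that ball), and I expect this feasibility check to be the only delicate point; everything else is the supporting-hyperplane computation above.
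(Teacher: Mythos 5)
Your construction is the standard supporting-hyperplane proof of this lemma; note that the paper itself contains no proof to compare against --- Lemma \ref{Lemma_Nesterov} is stated with a reference to \cite{bib_Nesterov,bib_Nesterov2016}. Your computation is correct: for $\tau=v_f(y,x_*)$ and a norming vector $u$ (which exists because the paper works in finite dimension; in a general normed space one would take an approximate maximizer and pass to the limit), the point $\hat x=x_*+\tau u$ satisfies $\langle\nabla f(y),\hat x-y\rangle=0$, hence $f(\hat x)\ge f(y)$ by the subgradient inequality, $\|\hat x-x_*\|=\tau$, and \eqref{eq_lemma} follows as soon as $\hat x$ is admissible in \eqref{eq13}. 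One small inaccuracy: $0\le f(y)-f(x_*)$ is not a consequence of convexity alone, since $x_*$ minimizes $f$ only over the subset of $X$ where additionally $g\le 0$; for $y\in X$ with $g(y)>0$ one can even have $\tau<0$, in which case the maximum in \eqref{eq13} is taken over the empty set. This is a defect of the statement of the lemma rather than of your argument, and it disappears under the reading of the lemma discussed next.

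The feasibility issue you single out at the end is not a technicality: as literally written, with the maximum in \eqref{eq13} restricted to $x\in X$, the lemma is false. Take $E=\mathbb{R}^2$ with the Euclidean norm, $X=\{(t,0):0\le t\le 1\}$, $g\equiv-1$, and $f(x_1,x_2)=x_1+10x_2$; then $x_*=(0,0)$, and for $y=(1,0)$ one has $\nabla f(y)=(1,10)$, $v_f(y,x_*)=1/\sqrt{101}$, so that $\omega\bigl(v_f(y,x_*)\bigr)=1/\sqrt{101}<1=f(y)-f(x_*)$, and \eqref{eq_lemma} fails. (With the maximum taken over the whole ball $\{x\in E:\|x-x_*\|\le\tau\}$ one gets $\omega=1$, and the lemma holds with equality, exactly at your competitor $\hat x$.) So the lemma must be understood in Nesterov's setting: either $X=E$, or $\omega$ is defined by maximizing over the ball in $E$ with $f$ convex there, or one assumes the ball of radius $v_f(y,x_*)$ around $x_*$ lies inside $X$ --- precisely the hypotheses you list. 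That weaker statement is also all the paper uses downstream: the corollaries only invoke the estimate $\omega(\tau)\le\|\nabla f(x_*)\|_*\tau+\frac{L}{2}\tau^2$, which is valid for the ball version of $\omega$ when $f$ is $L$-smooth on that ball. In short, your proof is the right one, and the feasibility caveat you flagged is exactly the correction the stated lemma needs.
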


Теперь мы можем показать (см. также доклад \cite{bib_Titov}), как с использованием предыдущего утверждения и теоремы \ref{th1}, можно оценить скорость сходимости алгоритма \ref{algorithm2FT}, если целевой функционал $f$ дифференцируем и его градиент удовлетворяет условию Липшица:
\begin{equation}\label{eqlipgrad}
||\nabla f(x)-\nabla f(y)||_*\leq L||x-y|| \quad \forall x,y\in X.
\end{equation}

Используя следующий известный факт
$$f(x)\leq f(x_*)+||\nabla f(x_*)||_*||x-x_*||+\frac{1}{2}L||x-x_*||^2,$$
мы можем получить
$$\min\limits_{k\in I}f(x^k)-f(x_*)\leq\min\limits_{k\in I} \left\{||\nabla f(x_*)||_*||x^k-x_*||+\frac{1}{2}L||x^k-x_*||^2\right\}.$$
Итак
$$f(x)-f(x_*)\leq ||\nabla f(x_*)||_*\frac{\varepsilon}{M_g}+\frac{L\varepsilon^2}{2M_g}.$$

Поэтому имеет место следующий результат \cite{bib_Titov}.
\begin{corollary}\label{col3}
Пусть $f$ дифференцируем на $X$ и верно \eqref{eqlipgrad}. Тогда после $$N=\left\lceil\frac{2M_g^2\Theta_0^2}{\varepsilon^2}\right\rceil$$ шагов работы алгоритма \ref{algorithm2FT} выполнена следующая оценка:  $$\min\limits_{0\leq k\leq N}f(x^k)-f(x_*)\leq ||\nabla f(x_*)||_*\frac{\varepsilon}{M_g}+\frac{L}{2}\frac{\varepsilon^2}{M_g^2}.$$
\end{corollary}

Мы можем применить наш метод к некоторому классу задач с негладкими целевыми функционалами специального типа \cite{bib_Titov}.

\begin{corollary}\label{col31}
	Предположим, что $f(x) = \max\limits_{i = \overline{1, m}} f_i(x)$, где $f_i$ дифференцируемы на каждой $x \in X$  и
	$$	||\nabla f_i(x)-\nabla f_i(y)||_*\leq L_i||x-y|| \quad \forall x,y\in X.	$$
	Тогда после $$N=\left\lceil\frac{2M_g^2\Theta_0^2}{\varepsilon^2}\right\rceil$$ шагов работы Алгоритма \ref{algorithm2FT} выполнена следующая оценка:
	$$\min\limits_{0\leq k\leq N}f(x^k)-f(x_*)\leq ||\nabla f(x_*)||_*\frac{\varepsilon}{M_g}+\frac{L}{2}\frac{\varepsilon^2}{M_g^2},	$$
	где $L = \max\limits_{i = \overline{1, m}} L_i$.
\end{corollary}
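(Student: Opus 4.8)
The plan is to reduce Corollary~\ref{col31} to the situation already handled in Corollary~\ref{col3}. All the iteration-count machinery is in place: Theorem~\ref{th1} guarantees that after $N=\left\lceil 2M_g^2\Theta_0^2/\varepsilon^2\right\rceil$ steps one has $\min_{k\in I}v_f(x^k,x_*)<\varepsilon/M_g$, and Lemma~\ref{Lemma_Nesterov} converts a small value of $v_f(\cdot,x_*)$ into a small objective residual through the growth function $\omega$ of~\eqref{eq13}. The only genuinely new task is to verify that the finite maximum $f=\max_i f_i$ admits the same quadratic majorant at $x_*$ that a single function with $L$-Lipschitz gradient would; once that is shown, the final estimate is obtained verbatim as in Corollary~\ref{col3}, now with $L=\max_i L_i$.

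First I would establish the componentwise quadratic upper bound. For each $i$ the Lipschitz condition on $\nabla f_i$ yields the standard inequality
\begin{equation*}
f_i(x)\le f_i(x_*)+\langle\nabla f_i(x_*),x-x_*\rangle+\frac{L_i}{2}\|x-x_*\|^2 ,
\end{equation*}
and, since $f(x_*)=\max_j f_j(x_*)\ge f_i(x_*)$ and $\langle\nabla f_i(x_*),x-x_*\rangle\le\|\nabla f_i(x_*)\|_*\|x-x_*\|$, this gives
\begin{equation*}
f_i(x)\le f(x_*)+\|\nabla f_i(x_*)\|_*\|x-x_*\|+\frac{L_i}{2}\|x-x_*\|^2 .
\end{equation*}
Choosing, for a fixed $x$, the index $i$ active there (so that $f(x)=f_i(x)$) and bounding $L_i\le L$ and $\|\nabla f_i(x_*)\|_*\le\|\nabla f(x_*)\|_*$, I arrive at the single-function majorant
\begin{equation*}
f(x)\le f(x_*)+\|\nabla f(x_*)\|_*\|x-x_*\|+\frac{L}{2}\|x-x_*\|^2 .
\end{equation*}

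With this in hand the remainder mirrors Corollary~\ref{col3}. Maximizing the majorant over the ball $\{x:\|x-x_*\|\le\tau\}$ bounds the growth function by $\omega(\tau)\le\|\nabla f(x_*)\|_*\,\tau+\frac{L}{2}\tau^2$, which is nondecreasing in $\tau$. Applying Lemma~\ref{Lemma_Nesterov} at each point $x^k$ and then using the monotonicity of $\omega$ together with $\min_{k\in I}v_f(x^k,x_*)<\varepsilon/M_g$ from Theorem~\ref{th1}, I select the index of $I$ achieving this minimum (whence $\min_{0\le k\le N}f(x^k)\le f(x^{k_0})$) and conclude
\begin{equation*}
\min_{0\le k\le N}f(x^k)-f(x_*)\le\omega\!\left(\tfrac{\varepsilon}{M_g}\right)\le\|\nabla f(x_*)\|_*\frac{\varepsilon}{M_g}+\frac{L}{2}\frac{\varepsilon^2}{M_g^2}.
\end{equation*}

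The step I expect to be delicate is the passage from the componentwise gradient norms $\|\nabla f_i(x_*)\|_*$ to the single norm $\|\nabla f(x_*)\|_*$, because $f$ need not be differentiable at $x_*$ and the index active at a nearby $x$ need not be active at $x_*$. This must be resolved by fixing the convention that $\nabla f(x_*)$ denotes the subgradient actually used by the algorithm (equivalently, by reading $\|\nabla f(x_*)\|_*$ as $\max_i\|\nabla f_i(x_*)\|_*$); apart from this bookkeeping about the active set, every inequality above is a routine consequence of convexity, the Lipschitz-gradient bound, and the Cauchy--Schwarz inequality, so no further obstacle arises.
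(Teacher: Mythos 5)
Your proposal is correct and takes essentially the same route as the paper: the paper gives no separate proof of this corollary, presenting it as the direct analogue of Corollary~\ref{col3}, whose argument is exactly your chain --- quadratic majorant of $f$ at $x_*$, the resulting bound $\omega(\tau)\le\|\nabla f(x_*)\|_*\tau+\frac{L}{2}\tau^2$ on the growth function, then Theorem~\ref{th1} combined with Lemma~\ref{Lemma_Nesterov}. Your treatment of the max structure is the only genuinely new step and you resolve it the right way, but note that the two conventions you call ``equivalent'' are not: reading $\|\nabla f(x_*)\|_*$ as $\max_i\|\nabla f_i(x_*)\|_*$ makes the estimate valid, whereas reading it as the norm of a subgradient selection from $\partial f(x_*)$ (which involves only the indices active at $x_*$, not the index active at a nearby $x$) can make both the majorant and the claimed bound false, so only the first reading supports the proof.
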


\begin{remark}
	Вообще $||\nabla f(x_*)||_* \neq 0$, поскольку мы рассматриваем некоторый класс условных задач.
\end{remark}

\begin{remark}\label{lipschits_objective}
Пусть целевой функционал $f:X\rightarrow\mathbb{R}$ удовлетворяет условию Липшица:
\begin{equation}\label{lipschits_condition}
|f(x)-f(y)|\leqslant M_f||x-y||\quad \forall x,y\in X.
\end{equation}
Итак
$$f(x) \leq f(x_*)+M_f||x - x_*||,$$
мы можем получить
$$\min\limits_{k\in I}f(x^k)-f(x_*)\leq\min\limits_{k\in I} \left\{M_f ||x^k - x_*||\right\}.$$
Итак, комбинируя утверждения теоремы \ref{theorem1} и леммы \ref{Lemma_Nesterov}, мы можем гарантировать после остановки алгоритма 1
выполнение неравенства
$$f(x)-f(x_*)\leq M_f \varepsilon,$$
и аналогично из теоремы \ref{th1} для алгоритма 2:
$$f(x)-f(x_*)\leq \frac{M_f}{M_g}\varepsilon.$$
Поэтому имеет место следующий результат.
\begin{corollary}\label{cor_lipschits1}
	Если $f$ удовлетворяет условию Липшица \eqref{lipschits_condition} на $X$. Тогда
	\begin{itemize}
		\item после $$N=\left\lceil\frac{2\max\{1, M_g^2\}\Theta_0^2}{\varepsilon^2}\right\rceil$$ шагов работы алгоритма \ref{algorithm1}, выполнена следующая оценка:
		$$\min\limits_{1\leq k\leq N}f(x^k)-f(x_*)\leq M_f \varepsilon;$$
		\item после $$N=\left\lceil\frac{2M_g^2\Theta_0^2}{\varepsilon^2}\right\rceil$$ шагов работы алгоритма \ref{algorithm2FT}, выполнена следующая оценка:
		  $$\min\limits_{1\leq k\leq N}f(x^k)-f(x_*)\leq \frac{M_f}{M_g}\varepsilon.$$
	\end{itemize}
\end{corollary}
\end{remark}

\begin{remark}\label{lips_Hessian_objective}
	Пусть целевой функционал $f:X\rightarrow\mathbb{R}$ дважды дифференцируем на $X$ и имеет липшицев гессиан, т.е. справедливо следующее неравенство
	\begin{equation}\label{lips_Hessian_condition}
	||\nabla^{2} f(x)-\nabla^{2} f(y)||_*\leq L ||x-y|| \quad \forall x,y\in X.
	\end{equation}
	Используя следующее неравенство (см. \cite{bib_Nesterov}, лемма 1.2.4 )
	$$ |f(x) - f(x_*) - \left\langle \nabla f(x_*), x-x_* \right\rangle -\dfrac{1}{2} \left\langle \nabla^2 f(x_*)(x-x_*), x-x_* \right\rangle  | \leq  \frac{L}{6}||x-x_*||^3,$$
	мы можем видеть, что
	$$ f(x) \leq f(x_*) + ||\nabla f(x_*) ||\cdot||x-x_*|| + \dfrac{1}{2}||\nabla^2 f(x_*)(x-x_*)||\cdot ||x-x_*||+\dfrac{L}{6}||x-x_*||^3
	$$
Итак
	$$ f(x) \leq f(x_*) + ||\nabla f(x_*) ||\cdot||x-x_*|| + \dfrac{1}{2} ||\nabla^2 f(x_*)||_{Fro} \cdot ||x-x_*||^2+\dfrac{L}{6}||x-x_*||^3
	$$
	где $||A||_{Fro}= tr(A^{T}A)$ норма Фробениуса матрицы $A \in \mathbb{R}^{m\times n}$. Тогда
	$$\min\limits_{k\in I}f(x^k)-f(x_*)\leq\min\limits_{k\in I} \left\{||\nabla f(x_*)||\cdot||x^k-x_*||+\frac{1}{2}||\nabla^2 f(x_*)||_{Fro}\cdot||x^k-x_*||^2 + \dfrac{L}{6}||x^k-x_*||^3\right\}.$$

Итак, комбинируя утверждение теоремы \ref{theorem1} и леммы \ref{Lemma_Nesterov}, возможно получить
	$$f(x)-f(x_*)\leq ||\nabla f(x_*)||_*\cdot\varepsilon + \frac{1}{2}||\nabla^2 f(x_*)||_{Fro} \cdot\varepsilon^2 + \frac{L}{6}\varepsilon^3,$$
а также аналогично из теоремы \ref{th1}
    $$f(x)-f(x_*)\leq ||\nabla f(x_*)||_*\cdot \frac{\varepsilon}{M_g} + \frac{1}{2}||\nabla^2 f(x_*)||_{Fro} \cdot\frac{\varepsilon^2}{M_g^2} + \frac{L}{6}\frac{\varepsilon^3}{M_g^3}.$$

Поэтому имеет место следующий результат.
	\begin{corollary}\label{cor_lips_Hessian}
Пусть $f$ дважды дифференцируем на $X$ и имеет липшицев гессиан, т.е. верно \eqref{lips_Hessian_condition}. Тогда
		\begin{itemize}
			\item после $$N=\left\lceil\frac{2\max\{1, M_g^2\}\Theta_0^2}{\varepsilon^2}\right\rceil$$ шагов работы алгоритма \ref{algorithm1} выполнена следующая оценка:
			$$\min\limits_{1\leq k\leq N}f(x^k)-f(x_*)\leq ||\nabla f(x_*)||_*\cdot\varepsilon + \frac{1}{2}||\nabla^2 f(x_*)||_{Fro} \cdot\varepsilon^2 + \frac{L}{6}\varepsilon^3;$$
			\item после $$N=\left\lceil\frac{2M_g^2\Theta_0^2}{\varepsilon^2}\right\rceil$$ шагов работы алгоритма \ref{algorithm2FT} выполнена следующая оценка:
			$$\min\limits_{1\leq k\leq N}f(x^k)-f(x_*)\leq ||\nabla f(x_*)||_*\cdot \frac{\varepsilon}{M_g} + \frac{1}{2}||\nabla^2 f(x_*)||_{Fro} \cdot\frac{\varepsilon^2}{M_g^2} + \frac{L}{6}\frac{\varepsilon^3}{M_g^3}.$$
		\end{itemize}
	\end{corollary}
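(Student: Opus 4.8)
The plan is to reduce both assertions to Lemma \ref{Lemma_Nesterov}, exactly along the lines already indicated in Remark \ref{lips_Hessian_objective}; the convergence counts $N$ are inherited verbatim from Theorems \ref{theorem1} and \ref{th1}, so the only genuinely new ingredient is a cubic majorant for the functional gap coming from the Lipschitz continuity of the Hessian. First I would invoke the third-order Taylor estimate (Nesterov, Lemma 1.2.4) that is guaranteed by \eqref{lips_Hessian_condition}, then bound the linear term by Cauchy--Schwarz and the quadratic term through $\|\nabla^2 f(x_*)(x-x_*)\| \leq \|\nabla^2 f(x_*)\|_{Fro}\,\|x-x_*\|$. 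This produces, for every $x\in X$,
$$
f(x)-f(x_*)\leq \|\nabla f(x_*)\|_*\,\|x-x_*\| + \tfrac12\|\nabla^2 f(x_*)\|_{Fro}\,\|x-x_*\|^2 + \tfrac{L}{6}\|x-x_*\|^3 .
$$

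Next I would read off from this a bound on the majorant $\omega$ defined in \eqref{eq13}. Since the right-hand side above depends on $x$ only through $\|x-x_*\|$ and is non-decreasing in it, taking the supremum over $\{x:\|x-x_*\|\leq\tau\}$ gives
$$
\omega(\tau)\leq \|\nabla f(x_*)\|_*\,\tau + \tfrac12\|\nabla^2 f(x_*)\|_{Fro}\,\tau^2 + \tfrac{L}{6}\tau^3 ,
$$
valid for all $\tau\geq0$, and $\omega$ is itself monotone non-decreasing by construction. The decisive step is then to combine Lemma \ref{Lemma_Nesterov} with the already established guarantees. For Algorithm \ref{algorithm1}, Theorem \ref{theorem1} yields $\min_{k\in I}v_f(x^k,x_*)<\varepsilon$ after $N=\lceil 2\max\{1,M_g^2\}\Theta_0^2/\varepsilon^2\rceil$ iterations; choosing the index $k^{\ast}\in I$ attaining this minimum and applying \eqref{eq_lemma} together with the monotonicity of $\omega$ gives $f(x^{k^{\ast}})-f(x_*)\leq\omega(v_f(x^{k^{\ast}},x_*))\leq\omega(\varepsilon)$, and substituting the cubic majorant at $\tau=\varepsilon$ produces the first displayed estimate. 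For Algorithm \ref{algorithm2FT} the reasoning is identical, with Theorem \ref{th1} now supplying $\min_{k\in I}v_f(x^k,x_*)<\varepsilon/M_g$, so one evaluates the majorant at $\tau=\varepsilon/M_g$ and obtains the second estimate.

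The one bookkeeping point I would be careful about is the index range: the theorems control $\min_{k\in I}$, whereas the corollary states $\min_{1\leq k\leq N}$; since $I$ is a subset of the full set of iterates, enlarging the index set can only decrease the left-hand side, so the inequality is preserved. No compactness difficulty arises, because $X$ is bounded and hence $\omega(\tau)$ is finite for every $\tau$. The substantive content — and thus the only real obstacle — is the passage from the convergence certificate measured by $v_f$ to the actual functional residual $f(x^k)-f(x_*)$; this is precisely what Lemma \ref{Lemma_Nesterov} accomplishes, and the cubic local growth of $\omega$ around $x_*$ is exactly what makes the quadratic and cubic correction terms in $\|\nabla^2 f(x_*)\|_{Fro}$ and $L$ appear.
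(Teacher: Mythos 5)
Your proposal is correct and follows essentially the same route as the paper: the cubic Taylor-type bound from Nesterov's Lemma 1.2.4 under \eqref{lips_Hessian_condition}, combined with Lemma \ref{Lemma_Nesterov} and the certificates $\min_{k\in I}v_f(x^k,x_*)<\varepsilon$ (Theorem \ref{theorem1}) and $\min_{k\in I}v_f(x^k,x_*)<\varepsilon/M_g$ (Theorem \ref{th1}). In fact you make explicit the step the paper leaves implicit --- majorizing $\omega(\tau)$ by the cubic and using its monotonicity --- which is exactly how the passage from $v_f$ to the functional residual is meant to work.
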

	
Мы можем применить наши методы к некоторому классу задач с негладкими целевыми функционалами.
	
	\begin{corollary}\label{cor_lips_Hessian_max}
		Предположим, что $f(x) = \max\limits_{i = \overline{1, m}} f_i(x)$, где $f_i$ дважды дифференцируемы в каждой точке $x \in X$  и
		$$||\nabla^{2} f(x)-\nabla^{2} f(y)||_*\leq L_i ||x-y|| \quad \forall x,y\in X.$$
		Тогда
		\begin{itemize}
			\item после $$N=\left\lceil\frac{2\max\{1, M_g^2\}\Theta_0^2}{\varepsilon^2}\right\rceil$$ шагов работы алгоритма \ref{algorithm1} выполнена следующая оценка:
			$$\min\limits_{1\leq k\leq N}f(x^k)-f(x_*)\leq ||\nabla f(x_*)||_*\cdot\varepsilon + \frac{1}{2}||\nabla^2 f(x_*)||_{Fro} \cdot\varepsilon^2 + \frac{L}{6}\varepsilon^3,$$
			где $L=\max\limits_{i = \overline{1, m}} L_i$;
			\item после $$N=\left\lceil\frac{2M_g^2\Theta_0^2}{\varepsilon^2}\right\rceil$$ шагов работы алгоритма \ref{algorithm2FT} выполнена следующая оценка:
			$$\min\limits_{1\leq k\leq N}f(x^k)-f(x_*)\leq ||\nabla f(x_*)||_*\cdot \frac{\varepsilon}{M_g} + \frac{1}{2}||\nabla^2 f(x_*)||_{Fro} \cdot\frac{\varepsilon^2}{M_g^2} + \frac{L}{6}\frac{\varepsilon^3}{M_g^3},$$
			где $L=\max\limits_{i = \overline{1, m}} L_i$.
		\end{itemize}
	\end{corollary}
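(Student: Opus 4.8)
The plan is to follow exactly the scheme of Corollary~\ref{cor_lips_Hessian}, replacing the single twice-differentiable objective by the pointwise maximum $f=\max_i f_i$ and treating each component separately. Two ingredients are already available. First, the convergence guarantees: Theorem~\ref{theorem1} for Algorithm~\ref{algorithm1} and Theorem~\ref{th1} for Algorithm~\ref{algorithm2FT}, which after $N$ steps produce an index $k\in I$ with $v_f(x^k,x_*)<\varepsilon$ (respectively $<\varepsilon/M_g$). Second, Lemma~\ref{Lemma_Nesterov}, which converts such a bound on $v_f$ into a bound on the objective residual through the auxiliary function $\omega$. Everything reduces to estimating $\omega(\tau)$ for the max-type $f$.

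The crucial step, and the one that differs from the smooth case, is that $f=\max_i f_i$ need not be twice (or even once) differentiable, so Taylor's formula cannot be applied to $f$ itself. I would bypass this with the elementary reduction
\[
f(x)-f(x_*)=\max_i f_i(x)-f(x_*)\le \max_i\bigl(f_i(x)-f_i(x_*)\bigr),
\]
which holds because $f(x_*)=\max_j f_j(x_*)\ge f_i(x_*)$ for every $i$. Each $f_i$ is twice differentiable with an $L_i$-Lipschitz Hessian, so Nesterov's third-order estimate (\cite{bib_Nesterov}, Lemma~1.2.4) applies to it, giving, after bounding the linear and quadratic forms by the dual and Frobenius norms,
\[
f_i(x)-f_i(x_*)\le \|\nabla f_i(x_*)\|_*\,\|x-x_*\|+\tfrac12\|\nabla^2 f_i(x_*)\|_{Fro}\,\|x-x_*\|^2+\tfrac{L_i}{6}\|x-x_*\|^3.
\]
Taking the maximum over $i$ and setting $L=\max_i L_i$ for the cubic remainder yields, for $\|x-x_*\|\le\tau$,
\[
\omega(\tau)\le \|\nabla f(x_*)\|_*\,\tau+\tfrac12\|\nabla^2 f(x_*)\|_{Fro}\,\tau^2+\tfrac{L}{6}\tau^3,
\]
where the lower-order quantities $\nabla f(x_*),\nabla^2 f(x_*)$ refer to the active component at $x_*$, exactly as in Corollary~\ref{cor_lips_Hessian}.

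To finish, I would combine the two ingredients. Since $\omega$ is nondecreasing in $\tau$, the minimizing index furnished by the convergence theorem gives $\min_{k\in I}\bigl(f(x^k)-f(x_*)\bigr)\le\omega\bigl(\min_{k\in I}v_f(x^k,x_*)\bigr)\le\omega(\varepsilon)$ for Algorithm~\ref{algorithm1} and $\le\omega(\varepsilon/M_g)$ for Algorithm~\ref{algorithm2FT}. Substituting $\tau=\varepsilon$, respectively $\tau=\varepsilon/M_g$, into the estimate for $\omega$ produces precisely the two claimed inequalities; writing the left-hand side as $\min_{1\le k\le N}$ only enlarges the index set and hence can only decrease the value. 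The main obstacle is purely the nonsmoothness of $f$: the whole point is that the Frobenius/Hessian machinery cannot be read off $f$ directly, and the max-reduction above is what legitimizes the per-component Taylor bounds, while the replacement of each $L_i$ by the uniform constant $L=\max_i L_i$ is what collapses the $m$ separate cubic remainders into a single clean third-order term. Beyond this the argument is the routine substitution already performed in Corollary~\ref{cor_lips_Hessian}.
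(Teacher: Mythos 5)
Your reconstruction follows exactly the route the paper intends: the paper gives no separate proof of Corollary~\ref{cor_lips_Hessian_max} at all --- it is stated inside Remark~\ref{lips_Hessian_objective} as an immediate combination of Nesterov's third-order Taylor estimate, Theorem~\ref{theorem1} (resp.\ Theorem~\ref{th1}) and Lemma~\ref{Lemma_Nesterov}, in the same way that Corollary~\ref{col31} extends Corollary~\ref{col3} --- and your per-component argument ($f(x)-f(x_*)\le\max_i\bigl(f_i(x)-f_i(x_*)\bigr)$ via $f(x_*)\ge f_i(x_*)$, Taylor bound for each $f_i$, then $L=\max_i L_i$) is precisely that argument, written out more explicitly than in the paper.

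One step deserves a warning, although the defect is inherited from the paper's formulation rather than introduced by you. After taking $\max_i$ of the component bounds, what you actually obtain in the linear and quadratic terms is $\max_i\|\nabla f_i(x_*)\|_*$ and $\max_i\|\nabla^2 f_i(x_*)\|_{Fro}$, \emph{not} the gradient and Hessian of the component active at $x_*$, as you assert in the sentence ``the lower-order quantities refer to the active component at $x_*$''. An inactive component can dominate: take $f_1\equiv 0$, $f_2(x)=-1+100x_1$, $x_*=0$; the active component at $x_*$ is $f_1$, so the active-component bound would give $f(x)-f(x_*)\le 0$, yet $f(x)>0$ as soon as $x_1>1/100$. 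Hence the displayed estimates are literally valid only if $\|\nabla f(x_*)\|_*$ and $\|\nabla^2 f(x_*)\|_{Fro}$ are interpreted as the corresponding maxima over all components (or if $\varepsilon$ is small relative to the activity gaps $f(x_*)-f_i(x_*)$). The paper is equally cavalier here --- $\nabla f(x_*)$ is not even well defined for the nonsmooth max --- so this is a flaw of the statement you were asked to prove rather than of your strategy; but your proof should adopt the component-maximum interpretation explicitly, since with it every step you wrote goes through, and without it the step from the per-component bounds to the claimed inequality fails.
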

\end{remark}

\section{Об ускорении рассматриваемых методов зеркального спуска для сильно выпуклых задач}

В этом разделе работы мы рассмотрим задачу
\begin{equation}\label{eq14}
f(x)\rightarrow\min,\;\;g(x)\leq 0,\;\;x\in X
\end{equation}
с предположениями \eqref{eq1}, а также сильной выпуклости $f$ и $g$ с одинаковым параметром $\mu > 0$. Мы также слегка модифицируем предположения на прокс-функцию $d(x)$. А именно, предположим, что $0 = \arg \min \limits_{x \in X} d(x)$ и что $d$ ограничено на единичном шаре в выбранной норме $ \| \cdot \| $, т. е.
\begin{equation}
d(x) \leq \Theta_{0}^{2}, \quad \forall x\in X : \|x \| \leq 1,
\label{eq:dUpBound}
\end{equation}
Наконец, мы допускаем, что нам дана начальная точка $x^0 \in X$ и число $R_0 >0$ такое, что $\| x_0 - x_* \|^2 \leq R_0^2$. Для построения метода решения задачи \eqref{eq14} при заданных предположениях мы используем идею рестартов (перезапусков) алгоритма \ref{algorithm1} и алгоритма \ref{algorithm2FT}. Рассмотрим вспомогательное утверждение (см., например \cite{bayandina2018primal-dual}).

\begin{lemma}\label{lem3}
	Если $f$ и $g$ --- $\mu$-сильно выпуклые функционалы относительно нормы $\|\cdot\|$ на $X$, $x_{\ast} = arg\min\limits_{x \in X} f(x)$, $g(x)\leq 0$ ($\forall x \in X$) и для некоторых $\varepsilon_{f}>0$, а также $\varepsilon_{g}>0$ верно:
	\begin{equation}\label{eq15}
	f(x)-f(x_{\ast})\leq \varepsilon_{f},\;\;g(x)\leq\varepsilon_{g}.
	\end{equation}
Тогда
	\begin{equation}\label{eq16}
	\frac{\mu}{2}\|x-x_{\ast}\|^{2}\leq\max\{\varepsilon_{f},\varepsilon_{g}\}.
	\end{equation}
\end{lemma}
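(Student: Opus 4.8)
The plan is to reduce the statement to the elementary distance bound that a $\mu$-strongly convex function enjoys at its minimizer, but applied not to $f$ directly rather to an auxiliary merit function that couples the objective and the constraint. Reading $x_*$ as the solution of the constrained problem \eqref{eq14} (so that $g(x_*)\leq 0$ and $f(x)\geq f(x_*)$ for every feasible $x$), I would set
$\varphi(x)=\max\{f(x)-f(x_*),\,g(x)\}$
and aim at the single inequality $\frac{\mu}{2}\|x-x_*\|^2\leq\varphi(x)$, after which \eqref{eq16} is immediate, since the hypotheses \eqref{eq15} give $\varphi(x)=\max\{f(x)-f(x_*),\,g(x)\}\leq\max\{\varepsilon_f,\varepsilon_g\}$.

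First I would check that $\varphi$ is $\mu$-strongly convex on $X$. The additive constant $-f(x_*)$ is irrelevant, so this is just the fact that the pointwise maximum of two functions each satisfying \eqref{def_strongly_convex} again satisfies it: at a point $x$ the maximum is attained by one of the two branches, and taking the gradient of that active branch as a subgradient of $\varphi$ at $x$, the inequality \eqref{def_strongly_convex} for $\varphi$ is inherited verbatim from the active function (passing to the maximum only enlarges the left-hand side).

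Next I would verify that $x_*$ minimizes $\varphi$ over $X$ and that $\varphi(x_*)=0$. Feasibility $g(x_*)\leq 0$ gives $\varphi(x_*)=\max\{0,g(x_*)\}=0$. For an arbitrary $x\in X$ there are two cases: if $x$ is feasible, i.e. $g(x)\leq 0$, then optimality of $x_*$ for \eqref{eq14} yields $f(x)-f(x_*)\geq 0$, whence $\varphi(x)\geq 0$; if $x$ is infeasible, then $\varphi(x)\geq g(x)>0$. In either case $\varphi(x)\geq 0=\varphi(x_*)$, so $x_*$ is a global minimizer of $\varphi$ on $X$.

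Finally I would apply the standard consequence of strong convexity at a constrained minimizer. Since $x_*$ minimizes the $\mu$-strongly convex $\varphi$ over the convex set $X$, there is a subgradient $s\in\partial\varphi(x_*)$ with $\langle s,\,x-x_*\rangle\geq 0$ for every $x\in X$; combining this with \eqref{def_strongly_convex} written for $\varphi$ gives $\varphi(x)\geq\varphi(x_*)+\frac{\mu}{2}\|x-x_*\|^2$, that is $\frac{\mu}{2}\|x-x_*\|^2\leq\varphi(x)$, and chaining with $\varphi(x)\leq\max\{\varepsilon_f,\varepsilon_g\}$ proves \eqref{eq16}. The only delicate point is the reduction itself: the argument that $x_*$ is a \emph{global} minimizer of the merit function relies essentially on $x_*$ solving the constrained problem, not merely on feasibility, so I would state the feasible/infeasible split cleanly. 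An alternative route, avoiding merit functions, is to introduce the KKT multiplier $\lambda\geq 0$ of the constraint at $x_*$, add the strong-convexity inequalities for $f$ and for $\lambda g$, use stationarity and complementary slackness to obtain $\frac{\mu(1+\lambda)}{2}\|x-x_*\|^2\leq\varepsilon_f+\lambda\varepsilon_g$, and then bound the weighted average $\frac{\varepsilon_f+\lambda\varepsilon_g}{1+\lambda}$ by $\max\{\varepsilon_f,\varepsilon_g\}$.
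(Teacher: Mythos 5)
Your proposal is correct, but there is nothing in the paper to compare it against line by line: Lemma \ref{lem3} is stated without proof, as a known auxiliary fact borrowed from \cite{bayandina2018primal-dual}, and the text proceeds directly to the restart construction. Judged on its own merits, your merit-function argument is sound. Setting $\varphi(x)=\max\{f(x)-f(x_*),\,g(x)\}$, you correctly observe that $\varphi$ inherits $\mu$-strong convexity (the inequality \eqref{def_strongly_convex} for the active branch passes to the pointwise maximum), that $x_*$ is a global minimizer of $\varphi$ on $X$ with $\varphi(x_*)=0$ --- and you rightly flag that this step uses optimality of $x_*$ for \eqref{eq14}, not mere feasibility --- and then quadratic growth at a constrained minimizer gives $\frac{\mu}{2}\|x-x_*\|^2\leq\varphi(x)\leq\max\{\varepsilon_f,\varepsilon_g\}$. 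This route has two genuine advantages: it needs no constraint qualification, and it treats in one stroke both the feasible case $g(x)\leq 0$ and the infeasible case $0<g(x)\leq\varepsilon_g$, which is exactly the situation for the points produced inside the restart schemes (Algorithms \ref{algorithm2} and \ref{Alg2}). By contrast, the KKT variant you sketch as an alternative tacitly assumes the existence of a multiplier $\lambda\geq 0$ (a Slater-type condition), which the lemma as stated does not grant, so your primary route is the better one. Two small points deserve explicit mention in a write-up: first, the hypothesis must indeed be read as ``$x_*$ solves the constrained problem \eqref{eq14}'' (your reading); the literal reading ``$g(x)\leq 0$ for all $x\in X$'' would render $\varepsilon_g$ vacuous and trivialize the claim. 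Second, in the quadratic-growth step the subgradient $s$ supplied by the optimality condition must itself satisfy the strong-convexity inequality; this is true because the function-value form of $\mu$-strong convexity (which the maximum inherits) yields the inequality for \emph{every} subgradient, or one can avoid subgradients altogether by the elementary argument $\varphi(x_*)\leq\varphi\bigl((1-t)x_*+tx\bigr)$ combined with the function-value strong-convexity inequality and the limit $t\to 0^+$.
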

Предположим, что $f(x) = \max\limits_{i = \overline{1, m}} f_i(x)$, где $f_i$ дифференцируемы во всякой точке $x \in X$ и имеют с липшицев градиент, т. е. существуют $ L_i> 0 $ такие, что
\begin{equation}\label{condition_Lip}
\lVert \nabla f_i(x)-\nabla f_i(y) \rVert_*\leq L_i\lVert x-y \rVert \quad \forall x,y\in X.
\end{equation}
Рассмотрим функцию $\tau: \mathbb{R}^{+}\rightarrow\mathbb{R}^{+}$:
\begin{equation}
\tau(\delta)=\max\left\{\delta\|\nabla f(x_{\ast})\|_{\ast}+\frac{\delta^{2}L}{2} , \; \delta \right\},
\end{equation}
где
$$
L:= \max\limits_{i = \overline{1, m}}\{L_i\}.
$$

Ясно, что функция $\tau $ возрастает и поэтому для всякого $\varepsilon>0$ существует
$$\hat{\varphi}(\varepsilon)>0:\;\;\tau(\hat{\varphi}(\varepsilon))=\varepsilon.$$

Рассмотрим следующий адаптивный aлгоритм \ref{algorithm2} для задачи \eqref{eq14}.

\begin{algorithm}[H]
	\caption{Адаптивный алгоритм зеркального спуска для сильно выпуклых
функционалов.}
	\label{algorithm2}
	\begin{algorithmic}[1]
		\REQUIRE $ \text{точность} \ \varepsilon>0; \text{начальная точка} \ x_0;$  $ \Theta_0 \  \text{s.t.} \ d(x)\leq\Theta_0^2 \quad \forall x\in X: \|x \| \leq 1;$ $X; d(\cdot);$  $ \text{параметр сильно выпуклости} \ \mu; R_0 \ \text{s.t.} \ \| x^0 - x_* \|^2 \leq R_0^2.$
		\STATE Set $d_0(x) = d\left(\frac{x-x^0}{R_0}\right)$.
		\STATE Set $p=1.$
		\REPEAT
		\STATE Set $R_p^2 = R_0^2 \cdot 2^{-p}.$
		\STATE Set $\varepsilon_p = \frac{\mu R_p^2}{2}.$
		\STATE Set $x^p$ --- выход алгоритма \ref{algorithm1} с точностью $\varepsilon_p$, прокс-функцией $d_{p-1}(\cdot)$ и $\Theta_0^2.$
		\STATE $d_p(x) \gets d\left(\frac{x - x^p}{R_p}\right)$.
		\STATE Set $p = p + 1.$
		\UNTIL $p>\log_2 \frac{\mu R_0^2}{2\varepsilon}.$
		\ENSURE $x^p.$
	\end{algorithmic}
\end{algorithm}

\begin{theorem}
Пусть $f$ имеет липшицев градиент, удовлетворяющий \eqref{condition_Lip}. Если $f$ и $g$ --- $\mu$-сильно выпуклые функционалы на $X\subset\mathbb{R}^{n}$  и $d(x)\leq \Theta^{2}_{0}$  для всех $x\in X,$ таких, что $\|x\|\leq1$. Пусть начальное приближение $x^{0}\in X$ и число $R_{0}>0$ заданы так, что
$$\|x^{0}-x_{\ast}\|^{2}\leq R^{2}_{0}.$$
Тогда для $\displaystyle{\widehat{p}=\left\lceil\log_{2}\frac{\mu R_{0}^{2}}{2\varepsilon}\right\rceil}$ выход $x_{\widehat{p}}$ есть $\varepsilon$-решение задачи \eqref{eq14} (т. е. $f(x^{\widehat{p}}) - f(x_*) < \varepsilon$ и $g(x^{\widehat{p}}) < \varepsilon$), где
$$
	\|x^{\widehat{p}}-x_{*}\|^{2}\leq\frac{2\varepsilon}{\mu}.
$$
При этом, количество итераций алгоритма \ref{algorithm1} не более
$$
	\widehat{p}+\sum_{p=1}^{\widehat{p}}\frac{2\Theta^{2}_{0}\max\{1, M_g^{2}\}}{\hat{\varphi}^{2}(\varepsilon_{p})},\;\;\text{где}\;\;\varepsilon_{p}=\frac{\mu R^{2}_{0}}{2^{p+1}}
	$$
	 итераций.
	
	\label{theorem2}
\end{theorem}

\begin{proof}
	Функция $d_{p}(x)=d\left( \dfrac{x-x^{p}}{R_{p}} \right)$, которая определена в алгоритме \ref{algorithm2}, является $1$-сильно выпуклой функцией относительно нормы $\dfrac{\|.\|}{R_{p}}$, для всех $p\geq 0$. Математической индукцией мы покажем, что
	 $$ \|x^{p}-x_{*}\|^{2} \leq R_{p}^{2} \quad \forall p \geq 0. $$
	Для $p=0$ это утверждение очевидно из-за выбора $x^0$ и $R_0$.
	Предположим, что для некоторого $p$, у нас $\|x^{p}-x_{*}\|^{2} \leq R_{p}^{2}$, и давайте докажем, что
	$\|x^{p+1}-x_{*}\|^{2} \leq R_{p+1}^{2}$. Имеем $\|x^{p}-x_{*}\|^{2} \leq R_{p}^{2}$. Докажем, что
	$\|x^{p+1}-x_{*}\|^{2} \leq R_{p+1}^{2}$.
	У нас $d_{p}(x_*) \leq \Theta_{0}^{2}$, таким образом, по теореме \ref{theorem1}, на $(p+1)$-м рестарте после не более чем
	$$
	N_{p+1}=\left\lceil\dfrac{2 \Theta_{0}^{2} \max\{1, M_{g}^{2}\}}{\hat{\varphi}^{2}(\varepsilon_{p+1})}\right\rceil
	$$
	итераций алгоритма \ref{algorithm1}, следующие неравенства верны для $x^{p+1}= \bar{x}^{N_{p+1}}$:
	$$
	f(x^{p+1})-f(x_{*}) \leq \varepsilon_{p+1}, \quad g(x^{p+1})\leq \varepsilon_{p+1} \quad \text{for} \quad  \varepsilon_{p+1}=\dfrac{\mu R_{p+1}^{2}}{2}.
	$$
	Тогда, согласно лемме \ref{lem3}
	$$\|x^{p+1}-x_{*}\|^{2} \leq \dfrac{2 \varepsilon_{p+1}}{\mu} = R_{p+1}^{2}.$$
	Итак, для всех $p \geq 0$ мы доказали, что
	$$
	\|x^{p}-x_{*}\|^{2} \leq R_{p}^{2} = \dfrac{R_{0}^{2}}{2^p},
	\quad f(x^p)-f(x_*)\leq \dfrac{\mu R_{0}^{2}}{2^{p+1}},
	\quad g(x^p) \leq \dfrac{\mu R_{0}^{2}}{2^{p+1}}.
	$$
	и так, для
	$p=\displaystyle{\widehat{p}=\left\lceil\log_{2}\frac{\mu R_{0}^{2}}{2\varepsilon}\right\rceil}$, $x_p$ это $\varepsilon$-решение задачи \eqref{eq14} и справедливо следующее соотношение
	$$	\|x^{p}-x_{*}\|^{2} \leq R_{p}^{2}= \dfrac{R_{0}^{2}}{2^{p}} \leq \dfrac{2 \varepsilon}{\mu}.	$$
	Итак, пусть $K$ обозначим общее число итераций алгоритма \ref{algorithm1}, и $N_{p}$ к общему числу итераций алгоритма \ref{algorithm1} на $p$-м рестарте. Поскольку функция
	$\tau: \mathbb{R}^{+}\rightarrow\mathbb{R}^{+}$,
	возрастает и для каждого $\varepsilon > 0$ существует
	$\hat{\varphi}(\varepsilon)>0:\;\tau(\hat{\varphi}(\varepsilon))=\varepsilon$.
	Поэтому мы имеем
	$$	K=\sum_{p=1}^{\widehat{p}} N_{p}=\sum_{p=1}^{\widehat{p}} \left\lceil \frac{2\Theta^{2}_{0}\max\{1, M_g^{2}\}}{\hat{\varphi}^{2}(\varepsilon_{p})} \right\rceil \leq \widehat{p}+\sum_{p=1}^{\widehat{p}}\frac{2\Theta^{2}_{0}\max
		\{1, M_g^{2}\}}{\hat{\varphi}^{2}(\varepsilon_{p})}. $$
\end{proof}

\begin{remark} Предыдущую оценку количества итераций работы алгоритма \ref{algorithm1} можно несколько конкретизировать в случае $\varepsilon < 1$. В этом случае при всяком $\delta < 1$ имеем $\tau(\delta) \leq C\delta$ для некоторой константы $C$. Поэтому можно считать, что $\hat{\varphi}(\varepsilon) = \widehat{C} \cdot \varepsilon$ для соответствующей константы $\widehat{C} > 0$.
Это означает, что на $p+1$-м рестарте алгоритма \ref{algorithm1} после не более, чем
		\begin{equation}
		k_{p+1} = \left\lceil\frac{\Omega \max\{1, M_g^2\} R_{p}^2}{\varepsilon_{p+1}^2}\right\rceil
		\label{eq:kpp1Est}
		\end{equation}
итераций работы алгоритма \ref{algorithm1}, выход $x_{p+1}$ гарантированно удовлетворяет неравенству
		$$
		f(x^{p+1})-f(x_*) \leq \widehat{C} \cdot \varepsilon_{p+1}, \quad g(x^{p+1}) \leq \varepsilon_{p+1},
		$$
		где $\varepsilon_{p+1} = \frac{\mu R_{p+1}^2}{2}$.
Тогда по лемме \ref{lem3},
		$$
		\| x^{p+1} - x_* \|^2 \leq \frac{2 \max\{1, \widehat{C}\}\varepsilon_{p+1}}{\mu} = \max\{1, \widehat{C}\} \cdot R_{p+1}^2.
		$$
Таким образом, всех $p\geq 0$,
$$\|x^p-x_*\|^2\leq \max\{1, \widehat{C}\} \cdot R_p^2 =  \max\{1, \widehat{C}\} \cdot R_0^2 \cdot 2^{-p}.$$

В то же время мы имеем для всяких $p\geq 1$ имеют место неравенства:
		$$
		f(x^{p})-f(x_*) \leq \max\{1, \widehat{C}\} \cdot \frac{\mu R_{0}^2}{2} \cdot 2^{-p}, \quad g(x_{p}) \leq \max\{1, \widehat{C}\} \cdot \frac{\mu R_{0}^2}{2} \cdot 2^{-p}.
		$$
		Таким образом, если  $p > \log_2 \frac{\mu R_0^2}{2 \varepsilon}$, то $x_p$ будет $\max\{1, \widehat{C}\} \cdot \varepsilon$-решением для поставленной задачи, причём:
		$$
		\|x^p-x_*\|^2\leq \max\{1, \widehat{C}\} \cdot R_0^2 \cdot 2^{-p} \leq \frac{2 \varepsilon}{\mu}.
		$$
		
	Оценим теперь общее число $N$ итераций алгоритма \ref{algorithm1}. Пусть $\hat{p} =  \left\lceil \log_2 \frac{\mu R_0^2}{2\varepsilon}\right\rceil$. Тогда согласно \eqref{eq:kpp1Est}, мы имеем с точностью до умножения на константу:
		\begin{align}
		N &= \sum_{p=1}^{\hat{p}} k_p \leq \sum_{p=1}^{\hat{p}} \left(1 + \frac{2\Theta_0^2 \max\{1, M_g^2\} R_{p}^2}{\varepsilon_{p+1}^2}\right) = \sum_{p=1}^{\hat{p}} \left(1 + \frac{32\Theta_0^2 \max\{1, M_g^2\}2^p}{\mu^2 R_0^2}\right) \notag \\
		& \leq \hat{p} + \frac{64 \Theta_0^2 \max\{1, M_g^2\}2^{ \hat{p} }}{\mu^2 R_0^2} \leq \hat{p} + \frac{64 \Theta_0^2 \max\{1, M_g^2\} }{\mu \varepsilon}. \notag
		\end{align}
\end{remark}

\begin{remark}
Вообще говоря, $\varphi(\varepsilon)$ зависит от $\|\nabla f(x_{\ast})\|_{\ast}$ и константа Липшица $L$ для $\nabla f$.
	Если $\|\nabla f(x_{\ast})\|_{\ast} < M_g$, тогда $\varphi(\varepsilon) = \varepsilon$ для небольших достаточно
	$\varepsilon$:
	$$ \varepsilon < \frac{2(M_g - \|\nabla f(x_{\ast})\|_{\ast})}{L}. $$
	Для другого случая  ($\|\nabla f(x_{\ast})\|_{\ast} > M_g$)  у нас $\forall \varepsilon > 0$:
	$$	\varphi(\varepsilon) = \frac{\sqrt{\|\nabla f(x_{\ast})\|_{\ast}^2 + 2\varepsilon L} - \|\nabla f(x_{\ast})\|_{\ast}}{L}.	 $$
\end{remark}

Рассмотрим также следующую частично адаптивную версию алгоритма \ref{Alg2} для задачи \eqref{eq14} \cite{bib_Titov}.

\begin{algorithm}[h!]
	\caption{ Частично адаптивный алгоритм зеркального спуска для сильно выпуклых функционалов}
	\label{Alg2}
	\begin{algorithmic}[1]
		\REQUIRE $ \text{точность} \ \varepsilon>0; \text{начальная точка} \ x^0;$  $ \Theta_0 \  \text{s.t.} \ d(x)\leq\Theta_0^2 \quad \forall x\in X: \|x \| \leq 1;$ $X; d(\cdot);$  $ \text{параметр сильно выпуклости} \ \mu; R_0 \ \text{s.t.} \ \| x^0 - x_* \|^2 \leq R_0^2.$
		\STATE  Set $d_0(x) = d\left(\frac{x-x^0}{R_0}\right)$.
		\STATE Set $p=1$.
		\REPEAT
		\STATE Set $R_p^2 = R_0^2 \cdot 2^{-p}$.
		\STATE Set $\varepsilon_p = \frac{\mu R_p^2}{2}$.
		\STATE Set $x^p$ --- выход алгоритма \ref{algorithm2FT} с точностью $\varepsilon_p$, прокс-функцией $d_{p-1}(\cdot)$ и $\Theta_0^2.$
		\STATE $d_p(x) \gets d\left(\frac{x - x^p}{R_p}\right)$.
		\STATE Set $p = p + 1$.
		\UNTIL{$p>\log_2 \frac{\mu R_0^2}{2\varepsilon}$.}
		\ENSURE $x^p$.
	\end{algorithmic}
\end{algorithm}

В условиях следствия \ref{col31} после остановки алгоритма \ref{Alg2} будут верными неравенства \eqref{eq15}
для $$\displaystyle{\varepsilon_{f}=\frac{\varepsilon}{M_g}\|\nabla f(x_{\ast})\|_{\ast}+\frac{\varepsilon^{2}L}{2M_g^2}}$$
и $\varepsilon_{g}=\varepsilon$. Рассмотрим функцию $\tau: \mathbb{R}^{+}\rightarrow\mathbb{R}^{+}$:
$$\tau(\delta)=\max\left\{\delta\|\nabla f(x_{\ast})\|_{\ast}+\frac{\delta^{2}L}{2}; \; \delta M_g\right\}.$$
Ясно, что функция $\tau$ возрастает и поэтому для каждого  $\varepsilon>0$ существует
$$\varphi(\varepsilon)>0:\;\;\tau(\varphi(\varepsilon))=\varepsilon.$$

Справедлива следующая

\begin{theorem}\label{th2}
Пусть $f$ и $g$ удовлетворяют условиям следствия \ref{col31}. Если $f$ и $g$ --- $\mu$-сильно выпуклые функционалы на $X\subset\mathbb{R}^{n}$ и $d(x)\leq \Theta^{2}_{0}\;\;\forall\,x\in X,\;\|x\|\leq1$. Пусть начальное приближение $x^{0}\in X$ и число $R_{0}>0$ заданы так, что $\|x^{0}-x_{\ast}\|^{2}\leq R^{2}_{0}$. Тогда для $\displaystyle{\widehat{p}=\left\lceil\log_{2}\frac{\mu R_{0}^{2}}{2\varepsilon}\right\rceil}$ выход $x^{\widehat{p}}$ есть $\varepsilon$-решение задачи \eqref{eq14}, где $$\|x^{\widehat{p}}-x_{\ast}\|^{2}\leq\frac{2\varepsilon}{\mu}.$$
При этом общее количество итераций алгоритма \ref{algorithm2FT} не превышает
 $$\widehat{p}+\sum_{p=1}^{\widehat{p}}\frac{2\Theta^{2}_{0}Mg^{2}}{\varphi^{2}(\varepsilon_{p})},\;\;\text{где}\;\;\varepsilon_{p}=\frac{\mu R^{2}_{0}}{2^{p+1}}.$$
\end{theorem}
\begin{proof}
	Функция $d_{p}(x)$ ($p=0,1,2,\ldots$) $1$-сильно выпукла относительно нормы $\displaystyle\frac{\|\cdot\|}{R_{p}}$, для всех $p\geq 0$. Методом математической индукции покажем, что
	$$\|x^{\widehat{p}}-x_{\ast}\|^{2}\leq R_{p}\;\;\forall p\geq0.$$
	Для $p=0$ это утверждение очевидно в силу выбора $x_0$ и $R_0$.
	Предположим, что для некоторого $p$: $\|x^{p}-x_{\ast}\|^{2}\leq R_{p}^{2}$. Докажем, что
	$\|x^{p+1}-x_{\ast}\|^{2}\leq R_{p+1}^{2}$. У нас  $d_{p}(x_{\ast})\leq\Theta^{2}_{0}$, и на $(p+1)$-м рестарте после не более чем
	$$\displaystyle\left\lceil\frac{2\Theta^{2}_{0}M_g^{2}}{\varphi^{2}(\varepsilon_{p+1})}\right\rceil$$
	итераций алгоритма \ref{algorithm2FT} будут выполняться следующие неравенства:
	$$f(x^{p+1})-f(x_{\ast})\leq\varepsilon_{p+1},\;\;\;g(x^{p+1})\leq\varepsilon_{p+1}\;\text{ для }\;\varepsilon_{p+1}=\frac{\mu R^{2}_{p+1}}{2}.$$
	Тогда, согласно лемме \ref{lem3}
	$$\|x^{p+1}-x_{\ast}\|^{2}\leq\frac{2\varepsilon_{p+1}}{\mu}=R^{2}_{p+1}.$$
	Итак, для произвольного $p\geq0$
	$$\|x^{p}-x_{\ast}\|^{2}\leq R^{2}_{p}=\frac{R^{2}_{0}}{2^{p}},\;\;\;f(x^{p})-f(x_{\ast})\leq\frac{\mu R^{2}_{0}}{2}2^{-p},\;\;\;g(x_{p})\leq\frac{\mu R^{2}_{0}}{2}2^{-p}.$$
	Для $\displaystyle{p=\widehat{p}=\left\lceil\log_{2}\frac{\mu R_{0}^{2}}{2\varepsilon}\right\rceil}$ верное следующее соотношение:
	$$\|x^{p}-x_{\ast}\|^{2}\leq R^{2}_{p}=R^{2}_{0}\cdot2^{-p}\leq\frac{2\varepsilon}{\mu}.$$

Остается лишь заметить, что количество итераций работы алгоритма \ref{algorithm2FT} не превосходит
	 $$\sum_{p=1}^{\widehat{p}}\left\lceil\frac{2\Theta^{2}_{0}M_g^{2}}{\varphi^{2}(\varepsilon_{p+1})}\right\rceil\leq\widehat{p}+\sum_{p=1}^{\widehat{p}}\frac{2\Theta^{2}_{0}M_g^{2}}{\varphi^{2}(\varepsilon_{p+1})}.$$
\end{proof}

\begin{remark}
По аналогии с рассуждениями замечания 5, при $\varepsilon < 1$ с точностью до умножения на константу можно уточнить верхнюю оценку количества итераций \ref{algorithm2FT}:
\begin{align}
		N = \hat{p} + \frac{64\Theta_0^2 M_g^2 \cdot 2^{\hat{p}}}{\mu^2 R_0^2} \leq \hat{p} + \frac{64\Theta_0^2 \cdot M_g^2}{\mu \varepsilon}. \notag
\end{align}
\end{remark}

\begin{remark}
Обратив внимание на следствия \ref{cor_lipschits1} и \ref{cor_lips_Hessian_max}, нетрудно понять, что при условии $\varepsilon < 1$ утверждения замечаний 5 и 7 нетрудно перенести и на случаи, когда целевой функционал $f$ удовлетворяет условию Липшица или условию Липшица для гессиана $f$.
\end{remark}

\section{Численные эксперименты}

\subsection{Сравнение скорости работы методов для задачи Ферма-Торричелли-Штейнера с ограничениями.} Отметим, что в (\cite{bib_Adaptive}, п. 3.1) предложен также следующий адаптивный метод, оптимальный с точки зрения нижних оракульных оценок в случае задач с липшицевым целевым функционалом.

\begin{algorithm}
\caption{Адаптивный зеркальный спуск (липшицев целевой функционал)}
\label{alg_lip}
\begin{algorithmic}[1]
\REQUIRE $\varepsilon>0,\Theta_0: \,d(x_*)\leqslant\Theta_0^2$
\STATE $x^0=argmin_{x\in X}\,d(x)$
\STATE $I=:\emptyset$
\STATE $N\leftarrow0$
\REPEAT
    \IF{$g(x^N)\leqslant\varepsilon$}
        \STATE $M_N=||\nabla f(x^N)||_*$, $h_N=\frac{\varepsilon}{M_N^2}$
        \STATE $x^{N+1}=Mirr_{x^N}(h_N\nabla f(x^N))\;\text{// \emph{"продуктивные шаги"}}$
        \STATE $N\rightarrow I$
    \ELSE
        \STATE $M_N=||\nabla g(x^N)||_*$, $h_N=\frac{\varepsilon}{M_N^2}$
        \STATE $x^{N+1}=Mirr_{x^N}(h_N\nabla g(x^N))\;\text{// \emph{"непродуктивные шаги"}}$
    \ENDIF
    \STATE $N\leftarrow N+1$
\UNTIL{$\sum\limits_{j=0}^{N-1}\frac{1}{M_j^2}\geqslant2\frac{\Theta_0^2}{\varepsilon^2}$}
\ENSURE $\bar{x}^N:=\frac{\sum\limits_{k\in I}x^kh_k}{\sum\limits_{k\in I}h_k}$
\end{algorithmic}
\end{algorithm}

\newpage

В настоящей работе мы рассматриваем альтернативный метод (алгоритм 1), оптимальность которого уже удаётся установить для условных задач с более широким классом целевых функционалов (имеющих липшицев градиент или липшицев гессиан). Но оказывается, что и в случае липшицевого целевого функционала, когда применим алгоритм 5, алгоритм 1 может работать быстрее. В качестве примера приведём расчёты для известной {\it задачи Ферма-Торричелли-Штейнера с ограничениями}.

\textbf{Задача.} \textit{Для заданных точек $A_k=(a_{1k},a_{2k},\ldots,a_{nk},)$ в n-мерном евклидовом пространстве $\mathbb{R}^n$ необходимо найти такую точку $X=(x_1,x_2,\ldots,x_n)$, чтобы целевая функция
$$f(x):=\sum_{k=1}^n\sqrt{(x_1-a_{1k})^2+(x_2-a_{2k})^2+\ldots+(x_n-a_{nk})^2}$$
принимала наименьшее значение на множестве $X$, которое задаётся несколькими ограничениями:}
$$g_1((x_1,\ldots,x_{10}))= 2x_1^2+x_2^2+\ldots+x_{10}^2-1 \leqslant 0,$$
$$g_2((x_1,\ldots,x_{10}))=x_1^2+2x_2^2+\ldots+x_{10}^2-1 \leqslant 0,$$
$$\ldots$$
$$g_{10}((x_1,\ldots,x_{10}))=x_1^2+x_2^2+\ldots+2x_{10}^2-1 \leqslant 0.$$

Мы приведём пример для $n = 10$, начального приближения $x^0 = (1, 1, ..., 1)$ с параметром $\Theta=3$ при выборе стандартной евклидовой прокс-структуры. Координаты точек  $A_k= (a_{1k}, a_{2k}, \ldots, a_{10k})$ при $k=1,2, \ldots, 10$ мы выбираем как строки следующей матрицы $A$:
$$
\left(
\begin{array}{cccccccccc}
1\, & 2\, & 1\, & 4\, &1\, & 0\, & 4\, & 4\, & 4\, & 3\\
2&  4 & 3 & 1  & 0  & 2  & 4  & 0  &4   & 0\\
3& 2  & 3 &  4 & 3  & 0  & 3  & 4  & 2  & 3\\
0&   0&  2&  0 & 2  & 4  & 4  & 1  &0   & 0\\
3&  3 & 4 & 4  & 3  & 0  & 1  & 0  & 4  &4 \\
2& 2  &4  &0   & 4  &0   & 2  & 2  &1   &1 \\
0& 4  & 3 & 4  & 2  & 3  & 3  & 4  & 0  &2 \\
2&  2 & 1 &  4 & 2  & 1  & 4  & 3  & 0  &3 \\
4&  1 & 2 &  2 & 3  & 3  & 2  & 1  & 3  &1 \\
3& 3  & 2 &  2 & 0  & 0  & 4  & 0  & 3  & 4
\end{array}
\right)
$$

Отметим также, что возможно некоторое ускорение метода в случае нескольких ограничений за счёт возможности выбора подходящего ограничения на непродуктивных итерациях (см. алгоритм 6 ниже \cite{bib_Stonyakin}), что видно из таблицы 1 ниже.

\begin{algorithm}
\caption{Модифицированный адаптивный зеркальный спуск}
\label{alg_mod_growth}
\begin{algorithmic}[1]
\REQUIRE $\varepsilon>0,\Theta_0:\,d(x_*)\leqslant\Theta_0^2$
\STATE $x^0=argmin_{x\in X}\,d(x)$
\STATE $I=:\emptyset$
\STATE $N\leftarrow0$
\REPEAT
    \IF{$g(x^N)\leqslant\varepsilon$}
        \STATE $h_N\leftarrow\frac{\varepsilon}{||\nabla f(x^N)||_{*}}$
        \STATE $x^{N+1}\leftarrow Mirr_{x^N}(h_N\nabla f(x^N))\;\text{// \emph{"продуктивные шаги"}}$
        \STATE $N\rightarrow I$
    \ELSE
        \STATE // \emph{$(g_{m(N)}(x^N)>\varepsilon)\;\text{для некоторого}\; m(N)\in \{1,\ldots,M\}$}
        \STATE $h_N\leftarrow\frac{\varepsilon}{||\nabla g_{m(N)}(x^N)||_{*}^2}$
        \STATE $x^{N+1}\leftarrow Mirr_{x^N}(h_N\nabla g_{m(N)}(x^N))\;\text{// \emph{"непродуктивные шаги"}}$
    \ENDIF
    \STATE $N\leftarrow N+1$
\UNTIL{$\Theta_0^2 \leqslant \frac{\varepsilon^2}{2}\left(|I|+\sum\limits_{k\not\in I}\frac{1}{||\nabla g_{m(k)}(x^k)||_{*}^2}\right)$}
\ENSURE $\bar{x}^N:=argmin_{x^k,\;k\in I}\,f(x^k)$
\end{algorithmic}
\end{algorithm}

\begin{table}[H]
	\centering
	\caption{Сравнение алгоритмов 1, 5 и 6}
	\label{table_ex1_alpha2}
	\begin{tabular}{|c|c|c|c|c|c|c|}
		\hline
		\multirow{2}{*}{$\varepsilon$} &  Итерации  & Время, с  &  Итерации  & Время, с  &  Итерации  &  Время, с \\ \cline{2-7}
		& \multicolumn{2}{c|}{Алгоритм 5} & \multicolumn{2}{c|}{Алгоритм 1} & \multicolumn{2}{c|}{Алгоритм 6} \\ \hline
	$1/2$ &1659  &97   &283  &15  &231  &6 \\ \hline
	$1/4$ &5951  &336  &899  &49  &774  &22   \\ \hline
	$1/8$ &22356 &1491 &3159 &180 &2850 &100  \\ \hline
	\end{tabular}
\end{table}

Приведём также сравнение скорости работы методов при тех же параметрах, но уже с негладкими функциональными ограничениями:
$$g_1((x_1,\ldots,x_{10}))= 2|x_1|+|x_2|+|\ldots+x_{10}|-1 \leqslant 0,$$
$$g_2((x_1,\ldots,x_{10}))=|x_1|+3|x_2|+\ldots+|x_{10}|-1 \leqslant 0,$$
$$\ldots$$
$$g_{10}((x_1,\ldots,x_{10}))=|x_1|+|x_2|+\ldots+11|x_{10}|-1 \leqslant 0.$$

\begin{table}[H]
	\centering
	\caption{Сравнение алгоритмов 1, 5 и 6}
	\label{table_ex1_alpha2}
	\begin{tabular}{|c|c|c|c|c|c|c|}
		\hline
		\multirow{2}{*}{$\varepsilon$} &  Итерации  & Время, с  &  Итерации  & Время, с  &  Итерации  &  Время, с \\ \cline{2-7}
		& \multicolumn{2}{c|}{Алгоритм 5} & \multicolumn{2}{c|}{Алгоритм 1} & \multicolumn{2}{c|}{Алгоритм 6} \\ \hline
	$1/2$ &3709  &279  &671  &29  &437  &21 \\ \hline
	$1/4$ &14212 &833  &2418 &103 &1970 &95   \\ \hline
	$1/8$ &54655 &2980 &8979 &455 &8329 &344  \\ \hline
	\end{tabular}
\end{table}

\subsection{О преимуществах использования метода с рестартами в сильно выпуклом случае.} Для демонстрации преимуществ алгоритма \ref{algorithm2} по сравнению с алгоритмом \ref{algorithm1}, был проведен ряд численных экспериментов. Рассмотрим различные $1$-сильно выпуклые целевые функционалы $f$, которые имеют липшицев градиент.
\begin{itemize}
	\item \textbf{Пример 1.}
	$$
	f(x) = \dfrac{L-\mu}{4}\left\{\dfrac{1}{2} \left[ x_{1}^{2} + \sum_{i=1}^{n-1}(x_{i} - x_{i+1})^2  \right] - x_{1} \right\} + \dfrac{\mu}{2} \| x \| ^{2}, \
	$$
	где $\mu = 1, L = 10\,000$ и $n=10 $.
	\item \textbf{Пример 2.}
	$$
	f(x) = \max \{ f_{1}(x), f_{2}(x), f_{3}(x) \}, \text{где}
	$$
	$$
	f_{1}(x) =\dfrac{1}{2} \left(x_{1}^{2} + x_{2}^{2} + 2x_{3}^{2} + 4x_{4}^{2} + x_{5}^{2} + 5x_{6}^{2} + 3x_{7}^{2} + 2x_{8}^{2} + 4x_{9}^{2} + 8x_{10}^{2}\right) - \sum_{i=1}^{10} ix_{i} +5,
	$$
	$$
	f_{2}(x) = \dfrac{1}{2} \left(2x_{1}^{2} + x_{2}^{2} + 3x_{3}^{2} + 4x_{4}^{2} + 2x_{5}^{2} + 5x_{6}^{2} + x_{7}^{2} + 6x_{8}^{2} + 7x_{9}^{2} + 2x_{10}^{2}\right) - \sum_{i=11}^{20} ix_{i} + 6,
	$$
	$$
	f_{3}(x) = \dfrac{1}{2} \left(x_{1}^{2} + x_{2}^{2} + 2x_{3}^{2} + 3x_{4}^{2} + 5x_{5}^{2} + x_{6}^{2} + 4x_{7}^{2} + 2x_{8}^{2} + 3x_{9}^{2} + 6x_{10}^{2}\right) - \sum_{i=21}^{30} ix_{i} + 7.
	$$
	\item  \textbf{Пример 3, \textbf{задача регрессии}} \cite{bib_acclereting_Nes_methods}.
	$$
	f(x)= \dfrac{1}{2} \|Ax-b\|^{2} + \dfrac{\mu}{2}\|x\|^{2}, \text{ где }
	$$
	$$ A=
	\left(
	\begin{array}{cccccccccc}
	5 &  3 &  3 &  5 &  4 &  4 & 3  &3   &5   &1 \\
	2\, & 4\, & 3\, & 5\, & 3\, & 4\, & 2\, & 2\, & 5\, & 4\\
	5& 2 &  1&  4&  1& 1 & 2 & 3 & 5 & 5,
	\end{array}
	\right)
	$$
	при $b= (1,2,3)^T $, $\mu=1$.
	
	\item \textbf{Пример 4.} Рассмотрим функцию следующего вида \cite{bib_acclereting_Nes_methods}:
	$$
	f(x) = \sum_{i=1}^{10} i x_i^{4} + \dfrac{1}{2}\|x\|^2
	$$
	\item \textbf{Пример 5.} Следующий тест выполнен для сглаженной сильно выпуклой версии задачи подавления шумов \cite{bib_acclereting_Nes_methods}
	$$
	f(x)= \dfrac{1}{2} \|Ax-b\|^{2} +\lambda \|x\|_{l_{1},\tau} + \dfrac{\mu}{2}\|x\|^{2}, \text{ где}
	$$
		$$ A=
	\left(
	\begin{array}{cccccccccc}
	9&  2 &  4 &  2 &  2 &  3 & 6  & 3  & 5  & 5\\
	6\, & 7\, & 2\, & 4\, & 8\, & 6\, & 8\, & 8\, & 5\, &1
	\end{array}
	\right),
	b= (1,2)^T, \mu = 1, \lambda = 0.05 , \tau = 0.0001
	$$
	и $\| . \|_{l_{1},\tau}$ задается следующим образом:
	$$
	\| x \|_{l_{1},\tau}=
	\begin{cases}
	|x|-\dfrac{\tau}{2} & \text{if} \ \   |x| \geq \tau\\
	\dfrac{1}{2\tau} x^2 & \text{if} \ \   |x| < \tau
	\end{cases}
	$$
	если $x$ --- скаляр и $\| x \|_{l_{1},\tau}= \sum_{i=1}^{n} \| x_i \|_{l_{1},\tau} $ если $x = (x_1, x_2, ..., x_n)$ --- вектор в $\mathbb{R}^{n}$. Отметим, что квадратичное слагаемое $\dfrac{\mu}{2}\|x\|^{2}$ гарантирует сильную выпуклость целевой функции.
\end{itemize}

Рассмотрим функциональные ограничения вида $g(x) = G(x) + S(x)$, где $S(x) = \dfrac{1}{2}\|x\|^2 $ и
$G(x) = \max\limits_{i \in \overline{1, m}} g_i(x)$, так, что $g_i(x) =\langle \alpha_i , x \rangle + \beta_i $, где $\alpha_i^{T}$ --- строки матрицы
$$
\left(
\begin{array}{cccccccccc}
1 & 1  & 1  & 1  & 1  & 1  & 1  & 1  & 1  & 1\\
7\, & 8\, & 6\, & 2\, & 9\, & 2\, & 3\, & 3\, & 2\, & 6\\
6 & 3 & 4 & 3 & 5 & 1 & 6 & 3 & 2 & 8\\
3 & 5 & 2 & 7 & 8 & 3 & 2 & 1 & 5 & 2\\
2 & 3 & 1 & 8 &  1&  2&  1&  1& 5 & 8\\
1 & 8 & 9 & 1 & 3 &5  & 1 & 3 & 5 & 2\\
1 & 7 & 8 & 5 & 5& 9 & 3 & 1 & 6 & 4\\
7& 3 & 5 & 8 & 9 & 1 & 8 & 7 &8  &8 \\
6 & 4 &  6& 2 & 9 & 2 & 3 & 1 & 6 & 3\\
2 &3  &  4& 4 & 2 & 1 & 9 & 1 &1  & 8
\end{array}
\right)
$$
и константы $\beta_i$ есть нули.

Считаем, что имеется стандартное евклидово расстояние и соответствующая прокс-структура, и
$$X = B_{1}(0) = \left\{x=(x_1,x_2,\dots ,x_{10}) \in \mathbb{R}^{10} \;|\; x_1^2+x_2^2 + ... + x_{10}^2 \leq 1 \right\},$$
начальное приближение $x^{0} = \frac{(1,1,...,1)}{\|(1,1,...,1)\|}$, $\Theta_0 = 3$, $R_0 = 2$, и точность $\varepsilon = 0.05 $.

Результаты выполнения алгоритмов \ref{algorithm1} и \ref{algorithm2} представлены в таблице 3. Приводится количество итераций и время (указано в минутах и в секундах) работы каждого алгоритма \ref{algorithm1} и \ref{algorithm2}.

Все вычисления были произведены с помощью программного обеспечения Python 3.4, на компьютере оснащенном Intel(R) Core(TM) i7-8550U CPU @ 1.80GHz, 1992 Mhz, 4 Core(s), 8 Logical Processor(s). ОЗУ компьютера составляла 8 ГБ.
\begin{table}[H]
	\centering
	\caption{Сравнение результатов работы алгоритмов \ref{algorithm1} и \ref{algorithm2}.}
	\label{table1}
	\begin{tabular}{|c|c|c|c|c|}
		\hline
		\multirow{2}{*}{} & Итерации          & Время       & Итерации         & Время       \\ \cline{2-5}
		& \multicolumn{2}{c|}{Алгоритм \ref{algorithm1}} & \multicolumn{2}{c|}{Алгоритм \ref{algorithm2}}   \\ \hline
		Пример 1  & 115\,973&09:16   &95\,447 &07:37   \\ \hline
		Пример 2  & 57\,798 &07:01   &45\,455 &05:14  \\ \hline
		Пример 3  & 56\,874 &05:02   &50\,747 &04:18   \\ \hline
		Пример 4  & 13\,720 &01:15   &6\,764  &00:38  \\ \hline
		Пример 5  & 64\,324 &06:04   &55\,073 &04:52    \\ \hline
\end{tabular}
\end{table}
Из таблицы \ref{table1}  видно, что алгоритм \ref{algorithm2} работает быстрее алгоритма \ref{algorithm1}.

\textbf{Благодарности.} Авторы выражают огромную признательность Юрию Евгеньевичу Нестерову, Александру Владимировичу Гасникову и Павлу Евгеньевичу Двуреченскому за плодотворные обсуждения и комментарии.

\end{document}